\urldef{\urluni}{\url}{http://www.mathematik.uni-kl.de/fuana/}
\urldef{\emailgrothaus}{\url}{grothaus@mathematik.uni-kl.de}
\urldef{\emailvosshall}{\url}{vosshall@mathematik.uni-kl.de}
\makeatletter\@addtoreset{equation}{section}\makeatother
\theoremstyle{plain}      \newtheorem{theorem}{Theorem}[section]
                          \newtheorem{corollary}[theorem]{Corollary}
                          \newtheorem{proposition}[theorem]{Proposition}
\theoremstyle{remark}     \newtheorem{remark}[theorem]{Remark}
                          \newtheorem{lemma}[theorem]{Lemma}
													\newtheorem{example}[theorem]{Example}
\theoremstyle{definition} \newtheorem{definition}[theorem]{Definition}
\begin{document} 

\newcommand{\grad}{\nabla}
\newcommand{\D}{\partial}
\newcommand{\E}{\mathcal{E}}
\newcommand{\N}{\mathbb{N}}
\newcommand{\R}{\mathbb{R}_{\scriptscriptstyle{\ge 0}}}
\newcommand{\dom}{\mathcal{D}}
\newcommand{\ess}{\operatorname{ess~inf}}
\newcommand{\cem}{\operatorname{\text{\ding{61}}}}
\newcommand{\supp}{\operatorname{\text{supp}}}
\newcommand{\ca}{\operatorname{\text{cap}}}

\setenumerate[1]{label=(\roman*)}       
\setenumerate[2]{label=(\alph*)}

\begin{titlepage}
\title{\Large Integration by parts on the law of \\ the modulus of the Brownian bridge}
\author{\normalsize \textsc{Martin Grothaus} \footnote{University of Kaiserslautern, P.O.Box 3049, 67653
Kaiserslautern, Germany.} \thanks{\urluni}~\thanks{\emailgrothaus}
 \and \normalsize \textsc{Robert Voßhall} \footnotemark[1] \footnotemark[2]~\thanks{\emailvosshall}}
\date{\small \today}
\end{titlepage}
\maketitle
\pagestyle{headings}

\begin{abstract}
\noindent
We prove an infinite dimensional integration by parts formula on the law of the modulus of the Brownian bridge $BB=(BB_t)_{0 \leq t \leq 1}$ from $0$ to $0$ in use of methods from white noise analysis and Dirichlet form theory. Additionally to the usual drift term, this formula contains a distribution which is constructed in the space of Hida distributions by means of a Wick product with Donsker's delta (which correlates with the local time of $|BB|$ at zero). This additional distribution corresponds to the reflection at zero caused by the modulus. \\

\noindent
\textbf{Mathematics Subject Classification 2010}: \textit{60H07, 60H40, 46F25, 31C25}\\
\textbf{Keywords}: \textit{reflected Brownian bridge, integration by parts formula in infinite dimensions, stochastic heat equation with reflection, white noise analysis, Dirichlet forms}
\end{abstract}

\section{Introduction} \label{secintro}

Integration by parts formulas are of particular importance with regard to the characterization of stochastic processes constructed in use of Dirichlet form methods in terms of a Fukushima decomposition. This construction and decomposition of stochastic processes provides a useful tool in order to identify solutions to SDEs and SPDEs. In finite dimensions the corresponding generator has been characterized in very general settings with various boundary behaviors. In the case of infinite dimensional processes with a non-trivial boundary behavior the results are less numerous, but of special interest with regard to infinite dimensional stochastic reflection problems. E.g. L. Zambotti proved in \cite{Zam02} an integration by parts formula on the law of the Bessel bridge of dimension $\delta=3$ on the set of continuous, non-negative paths and solved an associated infinite dimensional stochastic reflection problem of Nualart-Pardoux type. Results for $\delta >3$ can be found in the follow-up paper \cite{Zam03}. In \cite{RZZ12} this strategy was formulated in a much more general setting in terms of general Gaussian measures on Hilbert spaces, convex sets and BV functions in a Gelfand triple. Furthermore, a general integration by parts formula for Wiener measures restricted to bounded open subsets $\Omega$ of $\mathbb{R}^d$ with sufficiently regular boundary can be found in \cite{Har06}. In particular, this result generalizes the formula stated in \cite{Zam02}. In \cite{Zam05} an integration by parts formula on the law of the reflecting Brownian motion has been shown. \\
Our considerations are motivated by the aim to prove an infinite dimensional It{\^o}-Tanaka formula for the modulus $(|u_t|)_{t \geq 0}$ of the solution $(u_t)_{t \geq 0}$ of the stochastic heat equation with Dirichlet boundary conditions as well as convergence of finite dimensional approximations. The law of the Brownian bridge $BB=(BB_t)_{0 \leq t \leq 1}$ from $0$ to $0$ is the invariant measure for the solution of the stochastic heat equation on $(0,1)$ with Dirichlet boundary conditions. Consequently, the law of the modulus of $BB$ is the invariant measure for $v:=(v_t)_{t \geq 0}$ with $v_t:=|u_t|$, $t \geq 0$. A natural idea is to identify $(v_t)_{t \geq 0}$ as the weak solution of an SPDE with reflection (which is different from the SPDE of Nualart-Pardoux type). This corresponds to an infinite dimensional It{\^o}-Tanaka formula for $(u_t)_{t \geq 0}$. Related results in use of Malliavin calculus and regularization can be found in \cite{GNT05} and \cite{Zam06}. In order to investigate this problem in terms of Dirichlet form techniques an integration by parts formula on the law of the modulus of the Brownian bridge is essential. The Dirichlet form method has the important advantage that it allows to investigate additionally convergence of finite dimensional approximations in terms of Mosco convergence. The approximation by so-called sticky reflected distorted Brownian motions as constructed in \cite{FGV14} seems likely due to the convergence result in \cite{DGZ05}. Heuristically, our integration by parts formula (see Theorem \ref{thmmain} and the regularization in Remark \ref{remarkreg}) implies that $v=(v_t)_{t \geq 0}$ is a weak solution of the SPDE
\[
 \frac{\partial v}{\partial t}= \frac{1}{2} \frac{\partial^2 v}{\partial x^2} + \frac{\partial^2 W}{\partial t \partial x} - \Big( : \Big(\frac{\partial v}{\partial x}\Big)^2: -1 \Big) \frac{\partial}{\partial t} l_t^{0,v} \]
with Dirichlet boundary conditions on $(0,1)$, where $\frac{\partial^2 W}{\partial t \partial x}$ denotes space-time white noise, $: \big(\frac{\partial v}{\partial x}\big)^2:$ means a renormalization (Wick ordering) and $(l_t^{0,v})_{t \geq 0}=(l_t^{0,v}(x))_{t \geq 0,~ x \in (0,1)}$ is a family of local times of $v$ at $0$. The last term of the equation is ill-defined and represents a reflection term which is caused by the modulus. We would like to emphasize the similarity to the one dimensional KPZ equation.\\

The Bessel bridge is a $[0, \infty)$-valued process due to a repulsive drift term and for this reason this reflection is regular in a certain sense, whereas the reflection caused by the modulus is singular and can only be expressed in terms of local times. For this reason, additional difficulties occur. \\
The integration by parts formula on the law of the reflecting Brownian motion in \cite{Zam05} states that for a suitable class of Fr{\'e}chet differentiable functions $F$ on $L^2(0,1)$ and $h \in C_c^2(0,1)$ holds
\begin{align} \label{IBPZam} \mathbb{E} \big( \partial_h F(|B|) \big)=-\mathbb{E} \big( F(|B|) \int_0^1 h''_t ~|B_t| ~dt \big) + \mathbb{E}\big( F(|B|) \int_0^1 h_t :\dot{B}_t^2: dl_t^0 \big),
\end{align}
where $(l_t^0)_{t \geq 0}$ is the right local time of $(|B_t|)_{t \geq 0}$ at zero (which equals two times the right local time of $(B_t)_{t \geq 0}$ at zero) , i.e., it holds a.s.
\[ l_t^0= \lim_{\varepsilon \rightarrow 0} \frac{1}{\varepsilon} \int_0^t \mathbbm{1}_{[0,\varepsilon)}(|B_s|)~ds,
\] 
and the reflection term is defined by
\[ \mathbb{E}\big( F(|B|) \int_0^1 h_t :\dot{B}_t^2: dl_t^0 \big):=\lim_{\varepsilon \rightarrow 0} ~\mathbb{E}\big( F(|B|) \int_0^1 h_t :\dot{B}_{\varepsilon,t}^2: dl_t^0 \big) \]
for a pathwise regularization $(B_{\varepsilon,t})_{t \geq 0}$ of the Brownian motion and $(:\dot{B}_{\varepsilon,t}^2:)_{t \geq 0}$ the renormalization of the square of its derivative (similar to the definitions for the case of the Brownian bridge given in Section \ref{seclimit}). The disadvantage of this result is the fact that the expression $\int_0^1 h_t :\dot{B}_{t}^2: dl_t^0$ is not constructed explicitly. In the following, we use the main ideas of the first part of \cite{Zam05}, methods from white noise analysis and the theory of Dirichlet forms in order to prove an analogous result for the modulus of the Brownian bridge with the essential difference that the corresponding reflection term is not only given as a limit of expectations, but even in terms of a distribution which is constructed by an integral in the space of Hida distributions. This integral of Hida distributions uniquely determines an element in the dual of the Sobolev space $W^{1,2}(L^2(0,1);\mu^{BB})$, where $\mu^{BB}$ denotes the law of the Brownian bridge on $L^2(0,1)$. We would like to remark that our methods can easily be modified in order to improve the results of \cite{Zam05} correspondingly. \\

Let $q_t:=\mathbbm{1}_{[0,t)} - t ~\mathbbm{1}_{[0,1)}$ for $0 < t \leq 1$, $q_0:=0$ and define for $\eta \in L:=L^2(0,1)$ and $0 \leq t \leq 1$
\begin{align} \label{defH} (H\eta)_t:=H^{\eta}_t:= \int_0^1 q_t(s) \eta_s ~ds= \int_0^t \eta_s~ds - t \int_0^1 \eta_s ~ds.
\end{align}
Moreover, define
\[ \mathcal{F}C_b^{\infty}(L):=\big\{ g\big( (\eta_1,\cdot)_{\scriptstyle{L}},\dots, (\eta_k,\cdot)_{\scriptstyle{L}} \big)\big|~k \in \mathbb{N},~g \in C_b^{\infty}(\mathbb{R}^k),~\eta_i \in L,~i=1,\dots,k \big\} \]
and
\[ \exp(C^{\infty}):=\text{span}~\big\{  \sin((\eta,\cdot)_{\scriptstyle{L}}), \cos((\eta,\cdot)_{\scriptstyle{L}})~\big|~\eta \in C^{\infty}[0,1] \big\}. \]\\

In use of this definitions, Donsker's delta $\delta_0(|BB_t|)$ (see Definition \ref{defdelta}) and the so-called Wick product of Hida distributions (denoted by $\diamond$; see Definition \ref{defWick}), our main result states as follows:

\begin{theorem} \label{thmmain}
Let $h \in C_c^2(0,1)$ and $F \in \mathcal{F}C_b^{\infty}(L)$. Then, it holds
\begin{align} \mathbb{E}\big( \partial_{h} F(|BB|)\big)=- \mathbb{E} \big( F(|BB|) &\int_0^1 h''_t~ |BB_t|~dt \big) \label{eqmain} \\
&+ \Big\langle \! \! \Big\langle F(|BB|), 2\int_0^1 h_t ~\Gamma_t \diamond \delta_0 (|BB_t|)~dt \Big\rangle \! \! \Big\rangle, \notag 
\end{align}
where $\Gamma_t$, $0 < t <1$, is the Hida distribution with $S$-transform given by 
\[ S(\Gamma_t)(\varphi)=\lambda(\dot{H}^{\varphi}_{t},-H^{\varphi}_t,t)
\]
and
$\int_0^1 h_t ~\Gamma_t \diamond \delta_0 (|BB_t|)~dt$ is a Hida distribution-valued integral with $S$-transform given by
\[ S\Big(\int_0^1 h_t ~\Gamma_t \diamond \delta_0 (|BB_t|)~dt\Big)(\varphi)=\int_0^1 h_t~\frac{\lambda(\dot{H}^{\varphi}_{t},-H^{\varphi}_t,t)}{\sqrt{2 \pi (t-t^2)}} \exp\Big(-\frac{(H^{\varphi}_t)^2}{2(t-t^2)} \Big)~dt \]
for $\varphi \in \mathcal{S}(\mathbb{R})$ with
\[ \lambda(x,y,t)=\Big(x + \frac{1}{2}~ y ~\frac{1-2t}{t -t^2}~ \Big)^2 - \frac{1}{4} \frac{1}{t -t^2} \quad \text{for } x,y \in \mathbb{R},~0 <t <1.\]
The expression 
\[ \Big\langle \! \! \Big\langle G(BB), 2\int_0^1 h_t ~\Gamma_t \diamond \delta_0 (|BB_t|)~dt \Big\rangle \! \! \Big\rangle \] 
is well-defined for $G \in \exp(C^{\infty})$ via the $S$-transform and yields a continuous linear functional on $\exp(C^{\infty})$ with respect to the $W^{1,2}(L;\mu^{BB})$-norm. Furthermore, $\exp(C^{\infty})$ is dense in the Sobolev space $W^{1,2}(L;\mu^{BB})$ and thus, $\int_0^1 h_t ~\Gamma_t \diamond \delta_0 (|BB_t|)~dt$ extends uniquely to an element in $(W^{1,2}(L;\mu^{BB}))'$. $F \circ |\cdot| \in W^{1,2}(L;\mu^{BB})$ and hence, the dual paring in (\ref{eqmain}) is well-defined by the continuous extension.
\end{theorem}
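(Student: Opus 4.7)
The plan is to follow the overall strategy of \cite{Zam05} but realize the reflection term explicitly in the space of Hida distributions rather than only as a limit of expectations. As a starting point I would establish the baseline Cameron-Martin IBP: for $G \in \mathcal{F}C_b^\infty(L)$ and $h \in C_c^2(0,1)$,
\begin{equation*}
\mathbb{E}(\partial_h G(BB)) = -\mathbb{E}\Big(G(BB) \int_0^1 h''_t\, BB_t\, dt\Big),
\end{equation*}
which follows from quasi-invariance of $\mu^{BB}$ under Cameron-Martin shifts in $H^1_0(0,1)$ together with a double integration by parts in time, the assumption $h \in C_c^2(0,1)$ killing the boundary terms.

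Next I would regularize by choosing smooth even $\phi_\varepsilon : \mathbb{R} \to \mathbb{R}$ with $\phi_\varepsilon \to |\cdot|$ and $|\phi_\varepsilon'| \leq 1$, and apply the baseline identity to $F \circ \phi_\varepsilon \circ (\cdot)$. The chain rule inserts a pointwise factor $\phi_\varepsilon'(BB)$ in the direction of differentiation, so that as $\varepsilon \to 0$ the LHS converges to $\mathbb{E}(\partial_h F(|BB|))$ and the principal part of the RHS converges to $-\mathbb{E}(F(|BB|)\int_0^1 h''_t\, |BB_t|\, dt)$. The residual correction is concentrated near $\{BB_t = 0\}$ and has to be shown to converge, in the dual of $W^{1,2}(L;\mu^{BB})$, to the integral $2\int_0^1 h_t\, \Gamma_t \diamond \delta_0(|BB_t|)\, dt$.

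The identification of this limit is carried out through the $S$-transform. Since $BB_t$ is centred Gaussian with variance $t-t^2$ under $\mu^{BB}$ and $H^{\varphi}_t$ plays the role of the shifted mean in the white noise model, Definition \ref{defdelta} gives $S(\delta_0(|BB_t|))(\varphi) = 2\,(2\pi(t-t^2))^{-1/2}\exp(-(H^{\varphi}_t)^2/(2(t-t^2)))$. The polynomial $\lambda(\dot H^{\varphi}_t,-H^{\varphi}_t,t)$ is the $S$-transform of a Wick-renormalized conditional square: the affine shift $\dot H^{\varphi}_t + H^{\varphi}_t(1-2t)/(2(t-t^2))$ is the conditional mean of the bridge's formal derivative $\dot{BB}_t$ given $BB_t = H^{\varphi}_t$, while the subtraction $1/(4(t-t^2))$ is the Wick renormalization of $(\dot{BB}_t)^2$ on $\{BB_t=0\}$. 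Multiplicativity of $S$ on Wick products then yields the claimed formula, and integrability of $1/\sqrt{t-t^2}$ on $(0,1)$ ensures that the Hida-valued $t$-integral is well defined.

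For the continuity and extension statements I would test against $G \in \exp(C^\infty)$, for which $S(G(BB))(\varphi)$ is an explicit Gaussian-exponential. Fubini reduces the dual pairing to an integral in $t$ whose absolute value is bounded by a constant times $\|G(BB)\|_{W^{1,2}(L;\mu^{BB})}$ via Cauchy-Schwarz together with the above integrability. Density of $\exp(C^\infty)$ in $W^{1,2}(L;\mu^{BB})$ follows from standard approximation (the class separates points and is stable under directional derivatives), and $F \circ |\cdot|$ lies in $W^{1,2}(L;\mu^{BB})$ because $|\cdot|$ is $1$-Lipschitz so that the chain rule produces an $L^\infty$ gradient. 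The principal technical obstacle is the second step: one must propagate dominated convergence at the level of $S$-transforms to show that the regularized correction terms, a priori just real numbers, converge when paired against $G \in \exp(C^\infty)$ uniformly in $\|G(BB)\|_{W^{1,2}(L;\mu^{BB})}$ to the explicit Hida-distribution integral. The explicit Gaussian factors coming from $S(\delta_0(|BB_t|))$ provide the uniform $\varepsilon$-domination needed to pass to the limit and complete the identification.
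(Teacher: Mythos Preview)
Your regularization scheme in the second step does not produce the structure you claim. Applying the Cameron--Martin identity in direction $h$ to $F\circ\phi_\varepsilon$ gives
\[
\mathbb{E}\big(\partial_h (F\circ\phi_\varepsilon)(BB)\big)=-\mathbb{E}\Big((F\circ\phi_\varepsilon)(BB)\int_0^1 h''_t\,BB_t\,dt\Big),
\]
and the right-hand side contains $BB_t$, not $|BB_t|$ or $\phi_\varepsilon(BB_t)$. Passing to the limit yields simply $\mathbb{E}(\partial_h F(|BB|))=-\mathbb{E}(F(|BB|)\int_0^1 h''_t\,BB_t\,dt)$, which is just the Gaussian IBP for the symmetric function $F\circ|\cdot|$ and carries no reflection term. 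There is no ``residual correction concentrated near $\{BB_t=0\}$'': only $\phi_\varepsilon'$ enters via the chain rule, never $\phi_\varepsilon''$, so nothing approximating $\delta_0$ is generated. You appear to be importing the mechanism of the one-dimensional Tanaka formula without a place in the computation where it can act.

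The paper's route is genuinely different and supplies exactly the missing mechanism. One does \emph{not} apply the Gaussian IBP in direction $h$; instead, for Wick exponentials $\Phi_\eta(BB)=\,:\!\exp(\langle K\eta,\cdot\rangle)\!:$ one computes the derivative in the singular direction $h\cdot\mathrm{sgn}(BB)$ by Cameron--Martin shift, reducing to $\int_0^1 h_t\,\eta_t\,\mathbb{E}(\mathrm{sgn}(BB_t+(Q\eta)_t))\,dt$. The key step (Lemma~\ref{lem2}) is an It\^o-formula identity for the shifted bridge which, for smooth $\phi$, rewrites $\eta_t\,\mathbb{E}(\phi'(BB_t+(Q\eta)_t))$ as $-\frac{d^2}{dt^2}\mathbb{E}(\phi(\cdot))$ plus a term carrying $\phi''$ weighted by the function $\lambda$. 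Integrating against $h$ and letting $\phi\to|\cdot|$ (so $\phi''\to 2\delta_0$) produces simultaneously the $h''\,|BB_t|$ drift and the reflection term, which is then identified with the $S$-transform of $\Gamma_t\diamond\delta_0(|BB_t|)$ computed in Theorem~\ref{thmlimit}. Analytic continuation in $z$ passes from $\Phi_{z\eta}$ to $\sin/\cos$ and hence to $\exp(C^\infty)$. Note also that the continuity bound on $\exp(C^\infty)$ is \emph{not} obtained by estimating the Hida pairing directly (your Cauchy--Schwarz sketch would need control of the functional in a Hida norm, not the $W^{1,2}$-norm); it is read off from the already-established identity \eqref{eqhelp}, whose right-hand side is manifestly $W^{1,2}$-continuous. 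Finally, $F\circ|\cdot|\in W^{1,2}(L;\mu^{BB})$ requires more than the Lipschitz remark: the paper's Theorem~\ref{thmW12} uses an explicit $C^1$ approximation together with the fact (Lemma~\ref{lemPet}) that the zero set of $BB$ has Lebesgue measure zero a.s.\ to identify the weak gradient as $\mathrm{sgn}(\cdot)\,\nabla F\circ|\cdot|$. As a minor point, $\delta_0(|BB_t|)=\delta_0(BB_t)$ by Definition~\ref{defdelta}, so its $S$-transform has no factor~$2$; the $2$ in the theorem comes from $|\cdot|''=2\delta_0$.
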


\begin{remark}
Let $L_{+}:=\big\{ f \in L ~\big| ~f \geq 0\big\}$, $\mu^{|BB|}:=\mu^{BB} \circ |\cdot|^{-1}$ and define
\[ W^{1,2}(L_{+};\mu^{|BB|}):=\big\{ F \in L^2(L_+;\mu^{|BB|})~\big|~F \circ |\cdot| \in W^{1,2}(L;\mu^{BB}) \big\} \]
with norm $\Vert F \Vert_{\scriptstyle{W^{1,2}(L_{+};\mu^{|BB|})}}:=\Vert F \circ |\cdot|\Vert_{\scriptstyle{ W^{1,2}(L;\mu^{BB})}}$ for $F \in  W^{1,2}(L_{+};\mu^{|BB|})$. $W^{1,2}(L_{+};\mu^{|BB|})$ is a Hilbert space by \cite[Chapter VI, Exercise 1.1]{MR92}. The statement of Theorem \ref{thmmain} implies that $\widetilde{\mathcal{F}C_b^{\infty}}(L) \subset  W^{1,2}(L_{+};\mu^{|BB|})$, where $\widetilde{\mathcal{F}C_b^{\infty}}(L)$ denotes the space of equivalence classes of $\mathcal{F}C_b^{\infty}(L)$ in $L^2(L_+;\mu^{|BB|})$), and 
\begin{align} \label{norm} \Vert F \Vert_{\scriptstyle{ W^{1,2}(L_{+};\mu^{|BB|})}}^2= \Vert F \Vert_{\scriptstyle{L^2(L_+;\mu^{|BB|})}}^2 + \Vert \nabla F \Vert_{\scriptstyle{L^2(L_+;\mu^{|BB|})}}^2 \quad \text{for } F \in \widetilde{\mathcal{F}C_b^{\infty}}(L).\end{align}
Moreover, $\widetilde{\mathcal{F}C_b^{\infty}}(L)$ is dense in $L^2(L_+;\mu^{|BB|})$ by the monotone class theorem. Hence, in view of \cite[Chapter VI, Exercise 1.1]{MR92} the space $W^{1,2}(L_{+};\mu^{|BB|})$ is the domain of the image Dirichlet form of the Gaussian gradient Dirichlet form $(\mathcal{G},W^{1,2}(L;\mu^{BB}))$ defined in Section \ref{sectDF} under the modulus.\\
Let $H^{1,2}(L_{+};\mu^{|BB|})$ be the closure of $\widetilde{\mathcal{F}C_b^{\infty}}(L)$ in $W^{1,2}(L_{+};\mu^{|BB|})$. By (\ref{eqmain}) it holds
\begin{align*} \Big\langle \! \! \Big\langle F(|BB|), 2\int_0^1 h_t ~\Gamma_t \diamond \delta_0 (|BB_t|)~dt \Big\rangle \! \! \Big\rangle =\mathbb{E}\big( \partial_{h} F(|BB|)\big) + \mathbb{E} \big( F(|BB|) &\int_0^1 h''_t~ |BB_t|~dt \big)
\end{align*} 
for $F \in \widetilde{\mathcal{F}C_b^{\infty}}(L)$ and the right hand side is continuous on $\widetilde{\mathcal{F}C_b^{\infty}}(L)$ with respect to the $W^{1,2}(L_{+};\mu^{|BB|})$-norm in use of (\ref{norm}) (see also the proof of Theorem \ref{thmmain}). Therefore, the functional $\int_0^1 h_t ~\Gamma_t \diamond \delta_0 (|BB_t|)~dt$ can also be considered as an element in $\big(H^{1,2}(L_{+};\mu^{|BB|})\big)'$.
\end{remark}

\begin{remark} \label{remarkreg}
$\Gamma_t$, $0 < t <1$, is the limit in the Hida space $(\mathcal{S})'$ of $(\Gamma_{\varepsilon,t})_{\varepsilon >0}$ for $\varepsilon \rightarrow 0$, where 
\begin{align} \label{GammaBB} \Gamma_{\varepsilon,t}:= ~:\dot{BB}^2_{\varepsilon,t}:\Big(\cdot-\langle q_t,\cdot \rangle \frac{q_t}{\Vert q_t \Vert^2_{\scriptstyle{L^2(\mathbb{R})}}}\Big) -1 
\end{align}
with the renormalization $(:\dot{BB}_{\varepsilon,t}^2:)_{t \geq 0}$ of the squared derivative of the pathwise regularization $(BB_{\varepsilon,t})_{t \geq 0}$ of the Brownian bridge given in Section \ref{seclimit}. In use of the Wick product formula stated in Proposition \ref{thmWPF} it follows that for every $0 < t <1$ and $\varepsilon >0$ holds
\[ \Gamma_{\varepsilon,t} \diamond \delta_0(|BB_t|)= \big(:\dot{BB}_{\varepsilon, t}^2:-1 \big) \cdot \delta_0(|BB_t|), \]
where the product on the right hand side is defined pointwisely for the regular distribution $\delta_0(|BB_t|)$ (see (\ref{productG})). For $\varepsilon >0$ such that $\textnormal{supp}(h) \subset (\varepsilon,1-\varepsilon)$ it holds
\[ 2~\int_0^1 h_t ~\Gamma_{\varepsilon,t} \diamond \delta_0 (|BB_t|)~dt=2 ~\int_0^1 h_t ~\big(:\dot{BB}_{\varepsilon,t}^2:-1 \big) \cdot \delta_0 (|BB_t|)~dt=\int_0^1 h_t ~\big(:\dot{BB}_{\varepsilon,t}^2:-1 \big)~ dL_t^0\]
with $L_t^0=2~\int_0^t \delta_0(|BB_s|)~ds$ the right local time of the modulus of Brownian bridge at zero (see Example \ref{exdelta}). Here, the equality holds in the sense of Hida distrubtions (see Remark \ref{remark}). Hence, this approximation is in analogy to the result for the modulus of the Brownian motion in (\ref{IBPZam}). Furthermore, in the limit $\varepsilon \rightarrow 0$ we obtain the distributions stated in Theorem \ref{thmmain} (see Theorem \ref{thmlimit}). 
\end{remark}

Our paper is organzied as follows:

\begin{itemize}
\item Section \ref{sectintro} briefly introduces the main concepts and results on white noise analysis. 
\item In Section \ref{seclimit} the Hida distributions $\Gamma_t$, $0 < t<1$, and $\int_0^1 h_t ~\Gamma_t \diamond \delta_0 (|BB_t|)~dt$ are constructed and approximated by regular distributions.
\item In Section \ref{sectIBP} the connection of the distributions constructed in Section \ref{seclimit} to the integration by parts formula is presented for nice exponential functions which depend on $BB$ (not on $|BB|$).
\item We investigate properties of the Gaussian gradient Dirichlet form on $L^2(L;\mu^{BB})$ in Section \ref{sectDF}, where $\mu^{BB}$ is the law of $BB$ on $L$. In particular, we proof $F \circ |\cdot| \in W^{1,2}(L;\mu^{BB})$ for $F \in \mathcal{F}C_b^{\infty}(L)$.
\item Section \ref{sectmain} contains the proof of Theorem \ref{thmmain}. We use the results of Section \ref{sectDF} in order to extend the results of Section \ref{sectIBP}. Additionally, we calculate the constructed distribution applied to elements in $\exp(C^{\infty})$ explicitly.
\end{itemize}

\section{White noise analysis} \label{sectintro}

\subsection{White noise measure and Hida spaces} \label{sectWN}

In the following, we briefly introduce the main concepts of white noise analysis. For further reading, we refer to \cite{HKPS93}, \cite{Kuo96} and \cite{Oba94}. The results are of particular use in Section \ref{seclimit}.\\

Consider the Gelfand triple
\begin{align} \label{gelfand} \mathcal{S}(\mathbb{R}) \subset L^2(\mathbb{R}) \subset \mathcal{S}'(\mathbb{R}),
\end{align}
where $\mathcal{S}(\mathbb{R})$ denotes the space of smooth, rapidly decreasing functions (which is the projective limit of Hilbert spaces $(\mathcal{H}_p, \Vert \cdot \Vert_p)$, $p \in \mathbb{N}_0$) and $\mathcal{S}'(\mathbb{R})$ is the topological dual of $\mathcal{S}(\mathbb{R})$, called the space of tempered distributions. As usual, in (\ref{gelfand}) we identify $L^2(\mathbb{R})$ with its dual space. The dual pairing on $\mathcal{S}(\mathbb{R}) \times \mathcal{S}'(\mathbb{R})$ is denoted by $\langle \cdot, \cdot \rangle$. We also use the corresponding complexified spaces $\mathcal{S}_{\mathbb{C}}(\mathbb{R})$, $L^2_{\mathbb{C}}(\mathbb{R})$ as well as $\mathcal{S}'_{\mathbb{C}}(\mathbb{R})$ and the corresponding bilinear extension of the dual paring.  \\
The white noise measure $\mu$ is given by the Bochner-Minlos theorem as the unique probability measure on $(\mathcal{S}'(\mathbb{R}), \mathcal{B})$ satisfying
\[ \int_{\mathcal{S}'(\mathbb{R})} \exp(i \langle \varphi,\omega\rangle)~d\mu(\omega)=\exp\Big(-\frac{1}{2}~\int_{\mathbb{R}} \varphi^2~dx\Big) \]
for every $\varphi \in \mathcal{S}(\mathbb{R})$. Consequently, we define $L^2(\mu):=L^2(\mathcal{S}'(\mathbb{R});\mathbb{C};\mu)$. \\
Using the isometry 
\begin{align} \label{isometry} \int_{\mathcal{S}'(\mathbb{R})} | \langle \eta,\omega\rangle|^2~d\mu(\omega)= \int_{\mathbb{R}} |\eta|^2~dx=\Vert \eta \Vert^2_{L^2_{\mathbb{C}}(\mathbb{R})} \quad \text{for all } \eta \in \mathcal{S}_{\mathbb{C}}(\mathbb{R}) 
\end{align}
it is possible to define $\langle f, \cdot \rangle$ as an element of $L^2(\mu)$ for $f \in L^2_{\mathbb{C}}(\mathbb{R})$. In this sense, the dual pairing $\langle \cdot, \cdot \rangle$ extends from $\mathcal{S}_{\mathbb{C}}(\mathbb{R}) \times \mathcal{S}'_{\mathbb{C}}(\mathbb{R})$ to $L^2_{\mathbb{C}}(\mathbb{R}) \times \mathcal{S}'_{\mathbb{C}}(\mathbb{R})$.\\
Hence, the definition $\langle \mathbbm{1}_{[0,t)}, \cdot \rangle$ for $t > 0$ is well-defined as an element of $L^2(\mu)$. In use of the Kolmogorov-$\breve{\text{C}}$ensov-Lo{\`e}ve theorem it follows the existence of a continuous modification with surely continuous paths which are locally $\gamma$-H{\"o}lder continuous for $\gamma \in (0,1/2)$. This modification yields a standard Brownian motion on $(\mathcal{S}'(\mathbb{R}),\mathcal{B},\mu)$. In the following, we fix this modification and denote it by $(B_t)_{t \geq 0}$, where $B_0:=0$. Nevertheless, we may occasionally consider $B_t=\langle \mathbbm{1}_{[0,t)}, \cdot \rangle$ as an element of $L^2(\mu)$.  In this way, for $0 < t \leq 1$
\begin{align} \label{BBWN}
BB_t:=B_t - t B_1 \quad \text{ and } \quad BB_0:=0 
\end{align}
defines a Brownian bridge $BB=(BB_t)_{0 \leq t \leq 1}$ from $0$ to $0$. In particular, it holds for $0 < t \leq 1$
\begin{align} \label{BBWN2} BB_t =\langle \mathbbm{1}_{[0,t)}- t~ \mathbbm{1}_{[0,1)}, \cdot \rangle = \langle q_t, \cdot \rangle \quad \text{ and } BB_0=\langle q_0,\cdot \rangle 
\end{align}
as an element of $L^2(\mu)$ with $q_t=\mathbbm{1}_{[0,t)} - t ~\mathbbm{1}_{[0,1)}$ for $0 < t \leq 1$, $q_0:=0$. \\

The following result is very useful in later use:

\begin{proposition}[Cameron-Martin formula] \label{propCM}
For $\eta \in L^2(\mathbb{R})$ we define
\[ T_{+\eta}: \mathcal{S}'(\mathbb{R}) \rightarrow \mathcal{S}'(\mathbb{R}), \omega \mapsto T_{+\eta}(\omega):=\omega + \eta. \]
Then, the image measure of $\mu$ under $T_{+\eta}$ has the density $\exp(\langle \eta, \cdot \rangle - \frac{1}{2} \langle \eta, \eta \rangle)$ with respect to $\mu$.
\end{proposition}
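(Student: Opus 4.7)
The plan is to invoke the uniqueness part of the Bochner–Minlos theorem: a probability measure on $(\mathcal{S}'(\mathbb{R}),\mathcal{B})$ is uniquely determined by its characteristic functional on $\mathcal{S}(\mathbb{R})$. Setting $d\nu := \exp(\langle\eta,\cdot\rangle - \frac{1}{2}\langle\eta,\eta\rangle)\,d\mu$, it therefore suffices to verify
\[
\int \exp(i\langle\varphi,\omega\rangle)\,d(\mu \circ T_{+\eta}^{-1})(\omega) = \int \exp(i\langle\varphi,\omega\rangle)\,d\nu(\omega)
\]
for every $\varphi \in \mathcal{S}(\mathbb{R})$. Note that $\eta \in L^2(\mathbb{R})$ is viewed as an element of $\mathcal{S}'(\mathbb{R})$ via the $L^2$-pairing, so that $T_{+\eta}$ indeed maps $\mathcal{S}'(\mathbb{R})$ into itself, and $\langle\eta,\cdot\rangle$ is well-defined as a random variable in $L^2(\mu)$ by virtue of the isometry (\ref{isometry}).

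The left-hand side reduces by a change of variable and the defining characteristic functional of $\mu$ to
\[
\exp(i\langle\varphi,\eta\rangle)\int \exp(i\langle\varphi,\omega\rangle)\,d\mu(\omega) = \exp\Big(i\langle\varphi,\eta\rangle - \frac{1}{2}\langle\varphi,\varphi\rangle\Big).
\]
For the right-hand side I would compute $\exp(-\frac{1}{2}\langle\eta,\eta\rangle)\int \exp(\langle\eta + i\varphi,\cdot\rangle)\,d\mu$ and use that, again by (\ref{isometry}), the random variables $\langle\eta,\cdot\rangle$ and $\langle\varphi,\cdot\rangle$ are jointly centered Gaussian under $\mu$ with covariances $\langle\eta,\eta\rangle$, $\langle\eta,\varphi\rangle$ and $\langle\varphi,\varphi\rangle$. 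For real parameters $(t_1,t_2)$ the joint moment generating function equals
\[
\exp\Big(\frac{1}{2}(t_1^2\langle\eta,\eta\rangle + 2t_1 t_2\langle\eta,\varphi\rangle + t_2^2\langle\varphi,\varphi\rangle)\Big),
\]
and both sides are entire in $(t_1,t_2) \in \mathbb{C}^2$, so analytic continuation together with the choice $(t_1,t_2)=(1,i)$ yields $\int \exp(\langle\eta+i\varphi,\cdot\rangle)\,d\mu = \exp(\frac{1}{2}\langle\eta,\eta\rangle + i\langle\eta,\varphi\rangle - \frac{1}{2}\langle\varphi,\varphi\rangle)$. Multiplying by $\exp(-\frac{1}{2}\langle\eta,\eta\rangle)$ produces exactly $\exp(i\langle\eta,\varphi\rangle - \frac{1}{2}\langle\varphi,\varphi\rangle)$, matching the left-hand side.

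The main technical point is justifying the complex exponential moment identity for general $\eta \in L^2(\mathbb{R})$, since the defining relation for $\mu$ only applies to test functions $\varphi \in \mathcal{S}(\mathbb{R})$. I would handle this by a density argument: pick a sequence $\eta_n \in \mathcal{S}(\mathbb{R})$ with $\eta_n \to \eta$ in $L^2(\mathbb{R})$, use (\ref{isometry}) to obtain $L^2(\mu)$-convergence of $\langle\eta_n,\cdot\rangle$, and invoke Gaussian tail bounds to justify passing to the limit in the exponential moments by uniform integrability. As a by-product this also shows that the proposed density has total mass one under $\mu$, so that $\nu$ is indeed a probability measure and the Bochner–Minlos uniqueness step applies.
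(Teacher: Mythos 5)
The paper states this Cameron--Martin formula without proof, as a standard result from the white noise literature (cf.\ \cite{HKPS93}, \cite{Kuo96}), and your argument is precisely the standard one: identify both measures through their characteristic functionals via Bochner--Minlos uniqueness, compute the Gaussian exponential moment $\int \exp(\langle \eta + i\varphi,\cdot\rangle)\,d\mu$ by analytic continuation of the joint moment generating function, and pass from $\eta \in \mathcal{S}(\mathbb{R})$ to $\eta \in L^2(\mathbb{R})$ using the isometry (\ref{isometry}) together with uniform integrability from the uniformly bounded exponential moments. The proof is correct and complete.
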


In analogy to (\ref{gelfand}), the Gelfand triple
\[ (\mathcal{S}) \subset L^2(\mu) \subset (\mathcal{S})' \]
can be constructed, where $(\mathcal{S})$ denotes the space of Hida test functions (which is the projective limit of Hilbert spaces $((\mathcal{H}_p), |\!|\!| \cdot |\!|\!|_p)$, $p \in \mathbb{N}_0$) and $(\mathcal{S})'$ is its dual space, called the space of Hida distributions. The $S$- and $T$-transform of $\Phi \in (\mathcal{S})'$ are defined by
\[ S\Phi(\varphi):= \big\langle \! \! \big\langle :\exp(\langle \varphi, \cdot \rangle):,\Phi \big\rangle \! \! \big\rangle \ \text{ and } \ T\Phi(\varphi):= \big\langle \! \! \big\langle \exp(i\langle \varphi, \cdot \rangle),\Phi \big\rangle \! \! \big\rangle \quad \text{for } \varphi \in \mathcal{S}_{\mathbb{C}}(\mathbb{R})\]
respectively, where $\big\langle \! \! \big\langle \cdot, \cdot \big\rangle \! \! \big\rangle$ denotes the dual paring on $(\mathcal{S}) \times (\mathcal{S})'$ and 
\[:\exp(\langle \varphi, \cdot \rangle):~:=\exp\big(\langle \varphi, \cdot \rangle - \frac{1}{2} \langle \varphi, \varphi \rangle \big) \]
is the Wick exponential. $S$- and $T$-transform are connected by the relation \[ T\Phi(\varphi)=\exp\big(-\frac{1}{2} \langle \varphi,\varphi \rangle \big) S\Phi(i\varphi)\] 
and the generalized expectation is defined by $\mathbb{E}(\Phi):= \big\langle \! \! \big\langle \mathbbm{1}, \Phi \big\rangle \! \! \big\rangle = S\Phi(0)=T\Phi(0)$.\\

The space of Hida distributions can be characterized in terms of so-called $U$-functionals:

\begin{definition}
A function $F:\mathcal{S}(\mathbb{R}) \rightarrow \mathbb{C}$ is a \textit{$U$-functional} if and only if
\begin{itemize}
\item[(i)] $F$ is ray-analytic, i.e., for all $\varphi, \psi \in \mathcal{S}(\mathbb{R})$ the mapping $\mathbb{R} \ni x \mapsto F(\varphi + x \psi) \in \mathbb{C}$ is analytic and therefore, extends to an entire function on $\mathbb{C}$,
\item[(ii)] $F$ is uniformly bounded of exponential order 2, i.e., there exist $0 \leq A,B < \infty$ and $p \in \mathbb{N}_0$ such that
\[ |F(z\varphi)| \leq A \exp(B ~|z|^2 \Vert \varphi \Vert_p^2) \quad \text{for all } z \in \mathbb{C}, ~\varphi \in \mathcal{S}(\mathbb{R}).\]
\end{itemize}
\end{definition}

The following theorems can be found in \cite[Section 4.C]{HKPS93}.

\begin{theorem}[Characterization of Hida distributions] \label{thmHida}
A mapping $F:\mathcal{S}(\mathbb{R}) \rightarrow \mathbb{C}$ is a $U$-functional if and only if $F=S\Phi$ for some $\Phi \in (\mathcal{S})'$ (respectively $F=T\Psi$ for some $\Psi \in (\mathcal{S})'$).
\end{theorem}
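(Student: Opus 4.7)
The plan is to prove the two directions of the characterization separately, working with the $S$-transform; the $T$-transform statement then follows from the identity $T\Phi(\varphi) = \exp(-\tfrac{1}{2}\langle\varphi,\varphi\rangle)\,S\Phi(i\varphi)$ already recorded, since multiplication by a Gaussian factor preserves ray-analyticity and the exponential-of-order-2 bound.

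For the easy direction, suppose $\Phi \in (\mathcal{S})'$. By the definition of the Hida space as a projective limit, there exist $p \in \mathbb{N}_0$ and $C \geq 0$ with $|\langle\!\langle \Psi, \Phi\rangle\!\rangle| \leq C |\!|\!| \Psi |\!|\!|_p$ for all $\Psi \in (\mathcal{S})$. I would first check that, for $\varphi,\psi \in \mathcal{S}(\mathbb{R})$, the map $z \mapsto :\exp(\langle \varphi + z\psi, \cdot\rangle):$ is a $(\mathcal{S})$-valued entire function; this is standard and follows from expanding the Wick exponential in Wick powers of $\langle\psi,\cdot\rangle$ and estimating the Hida test-function norms term by term via the isometry with symmetric Fock space. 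Ray-analyticity of $S\Phi$ follows by pairing with $\Phi$. The exponential bound of order $2$ reduces to the pointwise estimate $|\!|\!| :\exp(\langle z\varphi,\cdot\rangle): |\!|\!|_p \leq A \exp(B|z|^2 \Vert \varphi \Vert_p^2)$, which comes directly from the Fock-space representation of the Wick exponential.

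For the harder converse, assume $F$ is a $U$-functional with constants $A,B,p$. I would reconstruct the putative chaos kernels by polarization: for $\varphi_1,\dots,\varphi_n \in \mathcal{S}(\mathbb{R})$, set
\[
F_n(\varphi_1,\dots,\varphi_n) := \frac{1}{n!}\,\frac{\partial^n}{\partial z_1\cdots\partial z_n}\,F(z_1\varphi_1+\cdots+z_n\varphi_n)\Big|_{z=0}.
\]
Ray-analyticity (combined with the polarization identity) ensures that $F_n$ is a symmetric $n$-linear form on $\mathcal{S}(\mathbb{R})$. The crucial step is a Cauchy estimate: applying the $n$-variable Cauchy integral formula over circles of radius $r_i$ and optimizing in $r_i$, the bound $|F(z\varphi)| \leq A \exp(B|z|^2 \Vert\varphi\Vert_p^2)$ yields
\[
|F_n(\varphi_1,\dots,\varphi_n)| \leq A \left(\frac{2eB}{n}\right)^{n/2} \prod_{i=1}^n \Vert \varphi_i \Vert_p.
\]
By the nuclear kernel theorem, $F_n$ extends to a continuous symmetric form, i.e.\ an element $\widehat F_n \in \mathcal{H}_{-q}^{\hat{\otimes} n}$ for some $q > p$ (choosing $q$ large enough so that the embedding $\mathcal{H}_q \hookrightarrow \mathcal{H}_p$ is Hilbert–Schmidt); the Hilbert–Schmidt constant gives an extra factor that can be absorbed by taking $q$ still larger. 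One then defines the candidate distribution via its Wiener–Itô chaos expansion $\Phi := \sum_{n\geq 0} I_n(\widehat F_n)$ (multiple Wiener integrals) and verifies that $\sum_n n!\, \Vert \widehat F_n \Vert_{-q}^2 < \infty$, using the superexponential factor $n^{-n/2}$ above against the factorial; this places $\Phi$ in some Hilbert space $(\mathcal{H}_{-q})$ and hence in $(\mathcal{S})'$. Finally, computing $S\Phi$ termwise and recognizing the Taylor series recovers $F$, giving $S\Phi = F$.

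The main obstacle is precisely this converse: obtaining kernel estimates sharp enough that, after the combinatorial factor $n!$ from the Wick/chaos normalization, the series $\sum_n n!\,\Vert \widehat F_n \Vert_{-q}^2$ converges. Everything hinges on the factor $n^{-n/2}$ produced by the Cauchy optimization under the exponential-of-order-$2$ hypothesis, together with a careful choice of $q$ so that the embedding $\mathcal{H}_q \hookrightarrow \mathcal{H}_p$ contributes a rapidly decaying sequence of singular values. Uniqueness of $\Phi$, incidentally, is immediate because Wick exponentials are total in $(\mathcal{S})$, so $S\Phi$ determines $\Phi$.
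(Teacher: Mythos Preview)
The paper does not actually prove this theorem: it simply states it and refers to \cite[Section 4.C]{HKPS93} for the proof. So there is no ``paper's own proof'' to compare against; the result is quoted as background from the white noise literature.

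That said, your sketch follows exactly the standard Potthoff--Streit argument on which the cited reference is based. The forward direction (that $S\Phi$ is a $U$-functional) is handled as you describe, via the Fock-space norm of the Wick exponential. For the converse, the reconstruction of the kernels by Taylor/Cauchy estimates from the exponential-order-2 bound, followed by an application of the kernel theorem and summation of the chaos expansion in some $(\mathcal{H}_{-q})$, is precisely the route taken in \cite{HKPS93} (and originally in Potthoff--Streit). One small point worth tightening in your write-up: the polarization step from the diagonal bound $|F_n(\varphi,\dots,\varphi)|$ to the off-diagonal bound $|F_n(\varphi_1,\dots,\varphi_n)|$ introduces an extra combinatorial factor (of the order $n^{n}/n!\sim e^{n}$), so the constant you record is not quite what one obtains directly; in the standard proof this is absorbed by choosing $q$ so that the Hilbert--Schmidt norm of the embedding $\mathcal{H}_{q}\hookrightarrow\mathcal{H}_{p}$ is strictly less than $1$, which then dominates any geometric factor. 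With that caveat, your outline is correct and matches the literature proof the paper invokes.
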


Furthermore, we state the following results on the convergence and integration of Hida distributions:

\begin{theorem} \label{thmconv}
Let $(F_n)_{n \in \mathbb{N}}$ be a sequence of $U$-functionals such that
\begin{itemize}
\item[(i)] $(F_n(\varphi))_{n \in \mathbb{N}}$ is a Cauchy sequence in $\mathbb{C}$ for every $\varphi \in \mathcal{S}(\mathbb{R})$,
\item[(ii)] there exist $p \in \mathbb{N}_0$, $N \in \mathbb{N}_0$ and $0 \leq A,B < \infty$ such that 
\[ |F_n(z \varphi)| \leq A \exp(B ~|z|^2 \Vert \varphi \Vert_p^2) \quad \text{for all } \varphi \in \mathcal{S}(\mathbb{R}), ~z \in \mathbb{C}, n \geq N.\]
\end{itemize}
Then, $(S^{-1}F_n)_{n \in \mathbb{N}}$ $\big((T^{-1}F_n)_{n \in \mathbb{N}} \big)$ converges in $(\mathcal{S})'$. Moreover, the $S$-transform $\big(T$-transform$\big)$ of the limit is given by $\lim_{n \rightarrow \infty} F_n(\varphi)$ for $\varphi \in \mathcal{S}(\mathbb{R})$.
\end{theorem}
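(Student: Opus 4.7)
The plan is to first identify the candidate limit distribution via the characterization theorem (Theorem \ref{thmHida}), and then promote the assumed pointwise convergence of the $S$-transforms to convergence in $(\mathcal{S})'$ by exploiting the uniform exponential bound (ii). Since the $T$- and $S$-transforms are related by $T\Phi(\varphi)=\exp(-\tfrac12\langle\varphi,\varphi\rangle)S\Phi(i\varphi)$, it suffices to treat the $S$-version; the $T$-version follows by direct substitution because multiplication by the Gaussian factor and composition with $\varphi\mapsto i\varphi$ preserve both the $U$-functional property and the hypotheses (i), (ii) up to adjustment of the constants.

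First I would set $F(\varphi):=\lim_{n\to\infty}F_n(\varphi)$, which exists for every $\varphi\in\mathcal{S}(\mathbb{R})$ by (i), and check that $F$ is a $U$-functional. For ray-analyticity I fix $\varphi,\psi\in\mathcal{S}(\mathbb{R})$ and consider the entire functions $z\mapsto F_n(\varphi+z\psi)$. Applying (ii) together with the elementary bound $\|\varphi+z\psi\|_p^2\leq 2\|\varphi\|_p^2+2|z|^2\|\psi\|_p^2$ yields an estimate of the form $|F_n(\varphi+z\psi)|\leq A\exp(2B\|\varphi\|_p^2)\exp(2B|z|^2\|\psi\|_p^2)$ uniformly in $n\geq N$, so the family $(F_n(\varphi+z\,\cdot\,))_{n\geq N}$ is normal on every compact set of $\mathbb{C}$. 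Since it converges pointwise on the real axis (an uncountable set with accumulation points), Vitali's convergence theorem yields that $z\mapsto F(\varphi+z\psi)$ is entire, giving (i) of the $U$-functional definition. The exponential bound (ii) passes to the pointwise limit with the same constants $A,B,p$, giving (ii). Theorem \ref{thmHida} then produces a unique $\Phi\in(\mathcal{S})'$ with $S\Phi=F$, and by construction its $S$-transform is $\lim_n F_n$ as claimed in the statement.

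To upgrade to convergence of $\Phi_n:=S^{-1}F_n$ to $\Phi$ in $(\mathcal{S})'$, I would use the quantitative version of the characterization theorem: the exponential bound with uniform constants $(A,B,p)$ implies, through the construction of $(\mathcal{S})$ by second quantization of the harmonic oscillator, a uniform bound $\sup_{n\geq N}|\!|\!|\Phi_n|\!|\!|_{-q}\leq C(A,B,p)$ in a Hilbert space $(\mathcal{H}_q)'\subset(\mathcal{S})'$ for sufficiently large $q$. Since the linear span of the Wick exponentials $\{\,{:}\exp(\langle\varphi,\cdot\rangle){:}\,|\,\varphi\in\mathcal{S}(\mathbb{R})\}$ is dense in $(\mathcal{S})$ and
\[
\big\langle\!\!\big\langle {:}\exp(\langle\varphi,\cdot\rangle){:},\Phi_n-\Phi\big\rangle\!\!\big\rangle = F_n(\varphi)-F(\varphi)\longrightarrow 0
\]
by construction, an $\varepsilon/3$-argument combining the uniform bound on $|\!|\!|\Phi_n-\Phi|\!|\!|_{-q}$ with the density produces $\langle\!\langle\phi,\Phi_n\rangle\!\rangle\to\langle\!\langle\phi,\Phi\rangle\!\rangle$ for every $\phi\in(\mathcal{S})$. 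Because $(\mathcal{S})$ is a nuclear Fréchet space, every bounded weakly convergent sequence in $(\mathcal{S})'$ converges strongly, and this gives $\Phi_n\to\Phi$ in $(\mathcal{S})'$.

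The main obstacle is the quantitative inverse estimate $|\!|\!|S^{-1}G|\!|\!|_{-q}\leq C(A,B,p)$ for a $U$-functional $G$ with bound constants $A,B,p$: it is not stated explicitly in Theorem \ref{thmHida} but is extracted from its proof in \cite{HKPS93}. Concretely, one must trace how $A,B,p$ control the Fock-space norms of the chaos coefficients of $S^{-1}G$, choosing $q$ large enough so that a Cauchy–Schwarz estimate against the Wick exponentials (whose Hida norms grow like $\exp(c\|\varphi\|_{p'}^2)$ for appropriate $p'\geq p$) absorbs the growth factor $\exp(B|z|^2\|\varphi\|_p^2)$ uniformly in $n$.
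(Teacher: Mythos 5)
The paper does not prove this theorem at all: it is quoted verbatim from \cite[Section 4.C]{HKPS93}, so there is no internal proof to compare against. Your argument reconstructs exactly the standard proof from that reference — identify the limit $U$-functional, invoke the characterization theorem to get the candidate $\Phi$, use the quantitative norm estimate hidden in the proof of Theorem \ref{thmHida} to get $\sup_n |\!|\!|\Phi_n|\!|\!|_{-q}\le C(A,B,p)$, and combine weak convergence on the Wick exponentials with this uniform bound and density to conclude. The architecture is right, and you correctly isolate the one genuinely quantitative ingredient (the bound $|\!|\!|S^{-1}G|\!|\!|_{-q}\le C(A,B,p)$ for $q$ large enough that the Hilbert--Schmidt norm of the embedding absorbs $B$) as something that must be extracted from the proof of the characterization theorem rather than from its statement.

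One step is justified incorrectly as written. Hypothesis (ii) bounds $F_n$ only at points of the form $z\varphi$ with $\varphi$ \emph{real}-valued, i.e.\ along complex lines through the origin in real directions; the point $\varphi+z\psi$ for non-real $z$ is a general element of $\mathcal{S}_{\mathbb{C}}(\mathbb{R})$ and cannot be written in that form unless $\varphi$ and $\psi$ are proportional, so you cannot obtain $|F_n(\varphi+z\psi)|\le A\exp(2B\Vert\varphi\Vert_p^2)\exp(2B|z|^2\Vert\psi\Vert_p^2)$ by ``applying (ii) together with $\Vert\varphi+z\psi\Vert_p^2\le 2\Vert\varphi\Vert_p^2+2|z|^2\Vert\psi\Vert_p^2$.'' The bound you want is true, but it is itself a lemma (part of the proof of Theorem \ref{thmHida} in \cite{HKPS93}): one considers the entire function $(z,w)\mapsto F_n(z\theta_1+w\theta_2)$ of two complex variables, uses (ii) along the real-direction rays $(z,w)=\lambda(s,t)$, $s,t\in\mathbb{R}$, and a Cauchy/power-series estimate in two variables to control general $(z,w)$, at the cost of enlarging the constant in the exponent. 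With that lemma cited (or proved), the normal-family/Vitali argument for ray-analyticity of the limit goes through, and the rest of your proof is sound, including the final upgrade from weak-$*$ to strong convergence via the Montel property of $(\mathcal{S})'$ (equivalently, via norm convergence in $\mathcal{H}_{-q'}$ for $q'>q$ with Hilbert--Schmidt embedding, which is how \cite{HKPS93} phrases it).
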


\begin{theorem} \label{thmint}
Let $(\Omega, \mathcal{F}, m)$ be a measure space and $\Phi: \Omega \rightarrow (\mathcal{S})'$. Set $F(\lambda, \varphi):=S(\Phi(\lambda))(\varphi)$ and assume that
\begin{itemize}
\item[(i)] for every $\varphi \in \mathcal{S}(\mathbb{R})$ the mapping
\[ \Omega \ni \lambda \mapsto F(\lambda, \varphi) \in \mathbb{C} \ \text{ is measurable,}\]
\item[(ii)] there exist $p \in \mathbb{N}_0$ and $A,B:\Omega \rightarrow [0,\infty)$ with $A \in L^1(\Omega;m)$, $B \in L^{\infty}(\Omega;m)$ such that
\[ |F(\lambda, z \varphi)| \leq A(\lambda) \exp(B(\lambda) ~|z|^2 \Vert \varphi \Vert_p^2) \quad \text{for all } \lambda \in \Omega, z \in \mathbb{C}, \varphi \in \mathcal{S}(\mathbb{R}).\]
\end{itemize}
Then, there exists $p' \in \mathbb{N}_0$ such that $\Phi \in L^1(\Omega;(\mathcal{H}_{-p'});m)$. In particular, 
\[ \int_{\Omega} \Phi(\lambda)~dm(\lambda) \in \mathcal{H}_{-p'} \subset (\mathcal{S})' \]
and 
\[ S \big( \int_{\Omega} \Phi(\lambda)~dm(\lambda) \big)(\varphi) =\int_{\Omega} S(\Phi(\lambda))(\varphi)~dm(\lambda) \quad \text{for all } \varphi \in \mathcal{S}(\mathbb{R})\]
(similarly for the $T$-transform).
\end{theorem}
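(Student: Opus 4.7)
The result to prove is Theorem \ref{thmint} on the existence and $S$-transform of the Bochner integral of an $(\mathcal{S})'$-valued map. My plan is to reduce everything to a Bochner integral in a single Hilbert space $(\mathcal{H}_{-p'})$ of the Gelfand chain, then to invoke the fact that the $S$-transform at a fixed test function is a continuous linear functional on that Hilbert space.

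\textbf{Step 1 (pointwise characterization).} Fix $\lambda\in\Omega$. The bound in (ii) applied with the same $p$ and with $A(\lambda),B(\lambda)<\infty$ shows that $F(\lambda,\cdot):\mathcal{S}(\mathbb{R})\to\mathbb{C}$ satisfies both defining properties of a $U$-functional (ray-analyticity is inherited from holomorphy of $z\mapsto F(\lambda,z\varphi)$ implicit in (ii)). Theorem \ref{thmHida} then produces a unique Hida distribution $\Phi(\lambda)\in(\mathcal{S})'$ with $S\Phi(\lambda)=F(\lambda,\cdot)$; this is the candidate for the integrand.

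\textbf{Step 2 (uniform norm estimate, the main obstacle).} What we really need from the characterization theorem is its \emph{quantitative} form stated in \cite[Sect.~4.C]{HKPS93}: there exist constants $c,K<\infty$ and $p'\in\mathbb{N}_0$, depending only on $p$ and on the uniform upper bound $\|B\|_{L^\infty(m)}$ (essentially $p'$ is chosen large enough that $c\,K\,\|B\|_{L^\infty(m)}$ is dominated by the spectral gap of the embedding $(\mathcal{H}_{p'})\hookrightarrow(\mathcal{H}_p)$), such that
\[
|\!|\!|\Phi(\lambda)|\!|\!|_{-p'}\ \le\ K\,A(\lambda)\qquad\text{for $m$-a.e.\ }\lambda\in\Omega.
\]
This is the key point of the proof. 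One obtains it by Taylor expanding $F(\lambda,\cdot)$ around $0$, identifying the $n$th Taylor coefficient with the symmetric kernel $\Phi_n(\lambda)$ of the chaos decomposition of $\Phi(\lambda)$, and estimating each kernel via Cauchy's formula from the exponential bound in (ii); summing the resulting geometric series in the $(\mathcal{H}_{-p'})$-norm is exactly what fixes the relation between $p',\|B\|_\infty$ and the constant $K$. The crucial role of $B\in L^\infty(m)$ is that it allows a single $p'$ to work for $m$-a.e.\ $\lambda$.

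\textbf{Step 3 (measurability and Bochner integrability).} By (i), $\lambda\mapsto S\Phi(\lambda)(\varphi)=\langle\!\langle{:}\exp(\langle\varphi,\cdot\rangle){:},\Phi(\lambda)\rangle\!\rangle$ is measurable for every $\varphi\in\mathcal{S}(\mathbb{R})$. The Wick exponentials $\{:\!\exp(\langle\varphi,\cdot\rangle)\!:\mid\varphi\in\mathcal{S}(\mathbb{R})\}$ span a dense subspace of $(\mathcal{S})$ and a fortiori of the separable Hilbert space $(\mathcal{H}_{p'})$; hence $\Phi:\Omega\to(\mathcal{H}_{-p'})$ is weakly measurable. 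Since $(\mathcal{H}_{-p'})$ is separable, Pettis' theorem upgrades this to strong measurability. Combining with Step~2 and $A\in L^1(m)$ gives
\[
\int_\Omega |\!|\!|\Phi(\lambda)|\!|\!|_{-p'}\,dm(\lambda)\ \le\ K\int_\Omega A(\lambda)\,dm(\lambda)\ <\ \infty,
\]
so $\Phi\in L^1(\Omega;(\mathcal{H}_{-p'});m)$ and the Bochner integral $\int_\Omega\Phi(\lambda)\,dm(\lambda)$ exists in $(\mathcal{H}_{-p'})\subset(\mathcal{S})'$.

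\textbf{Step 4 (interchange with the $S$-transform).} For every fixed $\varphi\in\mathcal{S}(\mathbb{R})$ we have ${:}\exp(\langle\varphi,\cdot\rangle){:}\in(\mathcal{S})\subset(\mathcal{H}_{p'})$, so $\Psi\mapsto S\Psi(\varphi)=\langle\!\langle{:}\exp(\langle\varphi,\cdot\rangle){:},\Psi\rangle\!\rangle$ is a continuous linear functional on $(\mathcal{H}_{-p'})$. The defining property of the Bochner integral — that bounded linear functionals commute with it — then yields
\[
S\Bigl(\int_\Omega\Phi(\lambda)\,dm(\lambda)\Bigr)(\varphi)\ =\ \int_\Omega S\Phi(\lambda)(\varphi)\,dm(\lambda)\ =\ \int_\Omega F(\lambda,\varphi)\,dm(\lambda),
\]
and the analogous identity for $T$ follows from the $T$-$S$ relation $T\Phi(\varphi)=\exp(-\tfrac12\langle\varphi,\varphi\rangle)\,S\Phi(i\varphi)$ recorded above. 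The principal difficulty is Step~2; everything else is standard Bochner-integration machinery applied in the separable Hilbert space $(\mathcal{H}_{-p'})$.
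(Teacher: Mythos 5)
The paper does not actually prove this theorem; it quotes it from \cite[Section 4.C]{HKPS93}. Your argument is precisely the standard proof from that reference, and it is correct: the crux is indeed the quantitative form of the characterization theorem in your Step 2, which gives $|\!|\!|\Phi(\lambda)|\!|\!|_{-p'}\le K\,A(\lambda)$ with $p'$ and $K$ depending only on $p$ and $\Vert B\Vert_{L^{\infty}(m)}$; after that, weak measurability (tested on the total set of Wick exponentials and upgraded via Pettis in the separable Hilbert space $(\mathcal{H}_{-p'})$) and the commutation of bounded linear functionals with the Bochner integral complete the proof exactly as you describe.
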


\begin{example}[Donsker's delta] \label{exdelta}
For $a \in \mathbb{R}$ and $0 \neq \eta \in L^2(\mathbb{R})$ we define
\[ \delta_a(\langle \eta, \cdot \rangle):= \frac{1}{2\pi} ~\int_{\mathbb{R}} \exp(i (\langle \eta, \cdot \rangle -a)x)~dx \in (\mathcal{S})'. \]
This definition is motivated by the Fourier representation of the Dirac measure $\delta_a$ and well-defined by Theorem \ref{thmint} with $(\Omega,\mathcal{F},m)=(\mathbb{R},\mathcal{B}(\mathbb{R}),dx)$. \\
Consider again the Brownian motion $(B_t)_{t \geq 0}$ and the Brownian bridge $(BB_t)_{0 \leq t \leq 1}$. It holds for $0 < t < \infty$
\[ S \delta_a(B_t)(\varphi)=\frac{1}{\sqrt{2 \pi t}} \exp\Big(-\frac{1}{2t} \big( \int_0^t \varphi_s~ ds - a \big)^2 \Big) \] 
and for $0 < t < 1$
\[ S \delta_a(BB_t)(\varphi)=\frac{1}{\sqrt{2 \pi (t-t^2)}} \exp\Big(-\frac{1}{2(t-t^2)} \big( \int_0^t \varphi_s~ ds - t \int_0^1 \varphi_s~ds - a \big)^2 \Big) \] 
for $\varphi \in \mathcal{S}(\mathbb{R})$. \\
In use of Theorem \ref{thmconv} it can be shown that $\varphi_{\varepsilon}(\langle \eta, \cdot \rangle -a )$ converges in $(\mathcal{S})'$ to $\delta_a(\langle \eta, \cdot \rangle)$ as $\varepsilon \rightarrow 0$ for $a \in \mathbb{R}$, $0 \neq \eta \in L^2(\mathbb{R})$ and a Dirac sequence $(\varphi_{\varepsilon})_{\varepsilon >0}$, e.g. 
\begin{align} \label{defDS} \varphi_{\varepsilon}(x):= \frac{1}{\sqrt{2\pi \varepsilon}} \exp\big(-\frac{x^2}{2 \varepsilon}~\big) 
\end{align} for $x \in \mathbb{R}$ and $\varepsilon >0$. In this case, it holds by symmetry
\[ \varphi_{\varepsilon}(|BB_t|)=\varphi_{\varepsilon}(|\langle q_t,\cdot \rangle|)=\varphi_{\varepsilon}(\langle q_t,\cdot \rangle)=\varphi_{\varepsilon}(BB_t) \]
as an element in $L^2(\mu)$. It is natural to define $\delta_0(|BB_t|)$, $0 < t < 1$, as the limit of $\varphi_{\varepsilon}(|BB_t|)$ in $(\mathcal{S})'$ for $\varepsilon \rightarrow 0$. In particular, it holds $\delta_0(|BB_t|)=\delta_0(BB_t)$. Moreover,
\begin{align} \label{reprLT} L_t^0=2~\int_0^t \delta_0 (|BB_s|)~ds= 2~\int_0^t \delta_0(BB_s)~ds=2~ l_t^0 \end{align}
for $0 \leq t \leq 1$ in the sense of Hida distributions, where $(L^a_t)_{0 \leq t \leq 1}$ denotes the right local time of the modulus of the Brownian bridge and $(l^a_t)_{0 \leq t \leq 1}$ denotes the central local time of the Brownian bridge at $a \in \mathbb{R}$ in the sense of \cite[Chapter VI, Corollary 1.9]{RY91}, i.e., it holds a.s.
\[ L_t^a=\lim_{\varepsilon \rightarrow 0} \frac{1}{\varepsilon}~\int_0^t \mathbbm{1}_{[a,a+\varepsilon)}(|BB_s|)~ds,\quad l_t^a=\lim_{\varepsilon \rightarrow 0} \frac{1}{2\varepsilon}~\int_0^t \mathbbm{1}_{(a-\varepsilon,a+\varepsilon)}(BB_s)~ds \quad \text{for } 0 \leq t \leq 1, ~a \in \mathbb{R}.\] 
(\ref{reprLT}) follows by a computation in analogy to the approach in the proof of \cite[Proposition 3.1]{Zam05} in use of the occupation times formula
\[ \int_0^t h(s,BB_s) ds=\int_{\mathbb{R}} \int_0^t h(s,a) ~dl_s^a ~da \]
for Borel measurable $h:[0,1] \times \mathbb{R} \rightarrow [0,\infty)$ (see \cite[Chapter VI, Exercise 1.15]{RY91}. Indeed, in this way it possible to show that  the expectation $\mathbb{E}( :\exp(\langle \varphi, \cdot \rangle): l_t^0)$ equals the $S$-transform of the $(\mathcal{S})'$-valued Bochner integral $\int_0^t \delta_0 (BB_s)~ds$ at $\varphi \in \mathcal{S}(\mathbb{R})$. Note that $l_t^0 \in L^2(\mu)$ in view of \cite[Chapter VI, Exercise 2.35]{RY91}. In the case of a Brownian motion the representation of the local time corresponding to (\ref{reprLT}) is also discussed in \cite[Example 13.24]{Kuo96}.
\end{example}

We recapitulate the definition of $\delta_0(|BB_t|)$, $0 < t <1$, given in Example \ref{exdelta}:

\begin{definition} \label{defdelta}
$\delta_0(|BB_t|)$, $0 < t < 1$, is defined as the limit of $\varphi_{\varepsilon}(|BB_t|)$ in $(\mathcal{S})'$ for $\varepsilon \rightarrow 0$, where $\varphi_{\varepsilon}$ is given by (\ref{defDS}).
\end{definition}

Due to the equality $BB_t=\langle q_t, \cdot \rangle$ in $L^2(\mu)$ the statement of the following lemma seems natural. However, $(BB_t)_{0 \leq t \leq 1}$ is only a continuous modification of $(\langle q_t, \cdot \rangle)_{0 \leq t \leq 1}$, i.e., $BB_t(\omega)=\langle q_t, \omega \rangle$ for $\omega \in \mathcal{S}'(\mathbb{R}) \backslash N_t$ and some $\mu$-null set $N_t$ (depending on $t$). Therefore, the proof requires caution.

\begin{lemma} \label{lemma1}
For $h \in C[0,1]$ and $\mu$-a.e. $\omega \in \mathcal{S}'(\mathbb{R})$ it holds
\[ \int_0^1 h_t~BB_t(\omega)~dt = \Big\langle \int_0^1 h_t~q_t~dt, \omega \Big\rangle, \]
where $\int_0^1 h_t~q_t~dt \in L^2(\mathbb{R})$ defined as a Bochner integral.
\end{lemma}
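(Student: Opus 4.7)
The plan is to approximate both sides of the claimed identity by common Riemann sums along a fixed countable net of partition points, thereby localizing the identification $BB_t(\omega) = \langle q_t, \omega\rangle$ to a countable set of $t$-values outside a single $\mu$-null set. Recall that the left-hand side is well-defined pointwise for every $\omega$ because $t \mapsto BB_t(\omega)$ is continuous by our choice of modification, whereas the right-hand side has to be read through the isometric extension $\iota : L^2(\mathbb{R}) \to L^2(\mu)$, $f \mapsto \langle f, \cdot \rangle$, given by (\ref{isometry}).

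First, I would identify the right-hand side as a Bochner integral in $L^2(\mu)$. Since $t \mapsto q_t$ is continuous from $[0,1]$ into $L^2(\mathbb{R})$ (one checks $\Vert q_t - q_s\Vert_{L^2(\mathbb{R})}^2 = (t-s)(1-(t-s))$ for $s<t$) and $h \in C[0,1]$, the map $t \mapsto h_t q_t$ is continuous, and pulling the bounded linear operator $\iota$ inside the Bochner integral yields
\[ \Big\langle \int_0^1 h_t q_t \, dt,\, \cdot \Big\rangle = \iota\Big(\int_0^1 h_t q_t \, dt\Big) = \int_0^1 h_t \langle q_t, \cdot \rangle \, dt \qquad \text{in } L^2(\mu). \]
Next, for partitions $0 = t_0^{(n)} < \cdots < t_n^{(n)} = 1$ with mesh tending to zero, set $\Delta_i^{(n)} := t_{i+1}^{(n)} - t_i^{(n)}$ and
\[ S_n(\omega) := \sum_{i=0}^{n-1} h(t_i^{(n)}) \, BB_{t_i^{(n)}}(\omega) \, \Delta_i^{(n)}, \qquad T_n(\omega) := \sum_{i=0}^{n-1} h(t_i^{(n)}) \, \langle q_{t_i^{(n)}}, \omega \rangle \, \Delta_i^{(n)}. \]
Outside the $\mu$-null set $N := \bigcup_{n,i} N_{t_i^{(n)}}$ one has $S_n(\omega) = T_n(\omega)$ for all $n$. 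By continuity of $t \mapsto h_t BB_t(\omega)$, the sums $S_n(\omega)$ converge pointwise to $\int_0^1 h_t BB_t(\omega) \, dt$; by $\iota$-continuity and continuity of $t \mapsto h_t q_t$ in $L^2(\mathbb{R})$, the sums $T_n$ converge in $L^2(\mu)$ to $\int_0^1 h_t \langle q_t, \cdot \rangle \, dt = \langle \int_0^1 h_t q_t \, dt, \cdot \rangle$.

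To finish, I would pass to a subsequence along which $T_n$ converges $\mu$-a.e. to $\langle \int_0^1 h_t q_t \, dt, \cdot \rangle$; combined with the pointwise equality $S_n = T_n$ on $\mathcal{S}'(\mathbb{R}) \setminus N$ and the pointwise convergence of $S_n$, this gives the stated identity for $\mu$-a.e. $\omega$. The main obstacle is precisely the one that motivates the lemma: the two sides are defined in genuinely different ways (pathwise versus in $L^2(\mu)$), and there is no direct Fubini-type argument that compares them because the exception set $N_t$ depends on $t$. The role of the Riemann-sum reduction is to force both limits through a countable collection of time evaluations, so that all of the per-$t$ exceptional null sets can be absorbed into a single $\mu$-null set.
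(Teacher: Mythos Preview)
Your proposal is correct and follows essentially the same route as the paper: approximate both sides by a common Riemann sum over a fixed countable set of partition points, use surely continuous paths for pointwise convergence of the $BB$-side, use the isometry and $L^2(\mathbb{R})$-continuity of $t\mapsto h_t q_t$ for $L^2(\mu)$-convergence of the $\langle q_t,\cdot\rangle$-side, and then pass to a subsequence for $\mu$-a.e.\ convergence. The paper carries this out with the equidistant partitions $t_i^{(k)}=i/k$, but the argument is otherwise identical; your explicit identification of $\iota(\int_0^1 h_t q_t\,dt)$ with the $L^2(\mu)$-Bochner integral and your remark on why a direct Fubini argument fails make the structure of the proof more transparent.
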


\begin{proof} 
	The sequence $(g^k)_{k \in \mathbb{N}}$ given by
	\[ [0,1] \ni t \mapsto g^k_t:=\sum_{i=1}^k h_{\frac{i}{k}}~q_{\frac{i}{k}}~\mathbbm{1}_{[\frac{i-1}{k},\frac{i}{k}]}(t) \in L^2(\mathbb{R}) \]
	is a Riemann approximation of the continuous function given by $[0,1] \ni t \mapsto h_t~q_t \in L^2(\mathbb{R})$. In particular, Riemann and Bochner integral coincide and we have
	\begin{align} \label{convL2} \int_0^1 g_t^k ~dt \rightarrow \int_0^1 h_t~q_t~dt \quad \text{in } L^2(\mathbb{R}) ~\text{ as } k \rightarrow \infty.\end{align} 
	Moreover, 
	\[ [0,1] \ni t \mapsto g^{k,\omega}_t:=\sum_{i=1}^k h_{\frac{i}{k}}~BB_{\frac{i}{k}}(\omega)~\mathbbm{1}_{[\frac{i-1}{k},\frac{i}{k}]}(t) \in \mathbb{R} \]
	is a Riemann approximation of $[0,1] \ni t \mapsto h_t~BB_t(\omega) \in \mathbb{R}$ for every $\omega \in \mathcal{S}'(\mathbb{R})$, since $(BB_t)_{0 \leq t \leq 1}$ has surely continuous paths. Thus, by possibly switching to a subsequence we get
	\begin{align*} \int_0^1 h_t ~BB_t(\omega) ~dt = \lim_{k \rightarrow \infty} \int_0^1 &g_t^{k,\omega}~dt =\lim_{k \rightarrow \infty} ~ \sum_{i=1}^k h_{\frac{i}{k}}~BB_{\frac{i}{k}}(\omega)~\frac{1}{k} =\lim_{k \rightarrow \infty} ~ \sum_{i=1}^k h_{\frac{i}{k}}~\langle q_{\frac{i}{k}}, \omega \rangle~\frac{1}{k} \\ 
	&=\lim_{k \rightarrow \infty}  \Big\langle \sum_{i=1}^k h_{\frac{i}{k}}~ q_{\frac{i}{k}}~\frac{1}{k} , \omega \Big\rangle =\lim_{k \rightarrow \infty}  \Big\langle \int_0^1 g_t^k~dt , \omega \Big\rangle = \Big\langle \int_0^1 h_t~q_t~dt, \omega \Big\rangle
	\end{align*}
	for $\mu$-a.e. $\omega \in \mathcal{S}'(\mathbb{R})$ in use of (\ref{BBWN2}), the isometry (\ref{isometry}) and (\ref{convL2}).
\end{proof}


\begin{lemma} \label{lemma2}
For $h \in C[0,1]$ and a.e. $s \in \mathbb{R}$ it holds
\[ \int_0^1 h_t~q_t~dt ~(s)= \begin{cases}
\int_0^1 h_t~q_t(s)~dt & \text{for } s \in [0,1] \\
0                      & \text{for } s \in \mathbb{R} \backslash [0,1]
\end{cases}  \]
and the function $[0,1] \ni s \mapsto \int_0^1 h_t~q_t(s)~dt=\int_s^1 h_t ~dt - \int_0^1 h_t~t~dt$ is continuous.
\end{lemma}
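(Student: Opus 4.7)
The plan is to identify the Bochner integral $\int_0^1 h_t q_t\, dt \in L^2(\mathbb{R})$ pointwise almost everywhere by testing it against arbitrary elements of $L^2(\mathbb{R})$. For every $\phi \in L^2(\mathbb{R})$, the defining property of the Bochner integral (continuous linear functionals commute with it) gives
\[ \int_{\mathbb{R}} \phi(s)\, \Bigl(\int_0^1 h_t q_t\, dt\Bigr)\!(s)\, ds = \int_0^1 h_t \int_{\mathbb{R}} q_t(s) \phi(s)\, ds\, dt. \]
Since $h$ is bounded on $[0,1]$, $\operatorname{supp}(q_t) \subset [0,1]$, and $|q_t(s)| \leq 1$, a Cauchy--Schwarz estimate shows that $(t,s) \mapsto h_t q_t(s) \phi(s)$ is integrable on $[0,1] \times \mathbb{R}$. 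Fubini's theorem therefore permits interchanging the order of integration on the right-hand side, and as $\phi \in L^2(\mathbb{R})$ is arbitrary, the $L^2$-representative of $\int_0^1 h_t q_t\, dt$ must agree almost everywhere with $s \mapsto \int_0^1 h_t q_t(s)\, dt$.

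Next, I would evaluate this pointwise integral. For $s \in \mathbb{R} \setminus [0,1]$ one has $q_t(s) = 0$ for every $t \in (0,1]$, so the integral vanishes. For $s \in [0,1)$ the identities $\mathbbm{1}_{[0,1)}(s) = 1$ and $\mathbbm{1}_{[0,t)}(s) = \mathbbm{1}_{(s,1]}(t)$ yield
\[ \int_0^1 h_t q_t(s)\, dt = \int_0^1 h_t \mathbbm{1}_{(s,1]}(t)\, dt - \int_0^1 h_t\, t\, dt = \int_s^1 h_t\, dt - \int_0^1 h_t\, t\, dt, \]
establishing the claimed representation (the single point $s=1$ is irrelevant for the a.e.\ identification).

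The continuity of $[0,1] \ni s \mapsto \int_s^1 h_t\, dt - \int_0^1 h_t\, t\, dt$ is then immediate: the second term is constant in $s$, while $s \mapsto \int_s^1 h_t\, dt$ is Lipschitz on $[0,1]$ because $h \in C[0,1]$ is bounded. The only nontrivial step in the proof is the justification of the exchange of integrals via Fubini, but this reduces to a routine integrability bound using the boundedness of $h$ and the compact support of $q_t$, so no genuine obstacle is expected.
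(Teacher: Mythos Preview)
Your proof is correct and takes a genuinely different route from the paper's. The paper reuses the Riemann-type approximating sequence $(g^k)_{k\in\mathbb{N}}$ of step functions introduced in the proof of Lemma~\ref{lemma1}: since $\int_0^1 g^k_t\,dt \to \int_0^1 h_t q_t\,dt$ in $L^2(\mathbb{R})$, one passes to a subsequence to get a.e.\ convergence, and then identifies the limit pointwise by recognising $\sum_{i=1}^k h_{i/k}\,q_{i/k}(s)\,\tfrac{1}{k}$ as a Riemann sum for $\int_0^1 h_t q_t(s)\,dt$ at each fixed $s$. Your argument instead exploits that bounded linear functionals commute with the Bochner integral, combined with Fubini, to identify the representative directly by duality in $L^2(\mathbb{R})$. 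Your approach is more self-contained and avoids the subsequence extraction; the paper's approach has the minor advantage of recycling the setup already established for Lemma~\ref{lemma1}, so no new ingredients are introduced.
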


\begin{proof}
Let $(g^k)_{k \in \mathbb{N}}$ be the sequence defined in the proof of Lemma \ref{lem1}. It holds for a.e. $s \in [0,1]$ by possibly switching to a subsequence
\begin{align*}
\int_0^1 h_t~q_t~dt ~(s)=\lim_{k \rightarrow \infty} \int_0^1 g_t^k~dt (s)=\lim_{k \rightarrow \infty} \sum_{i=1}^k h_{\frac{i}{k}}~ q_{\frac{i}{k}}(s)~\frac{1}{k} &=\int_0^1 h_t~q_t(s)~dt \\
&= \int_s^1 h_t ~dt - \int_0^1 h_t~t~dt
\end{align*}
and the last expression is continuous in $s$. Similarly, it follows $\int_0^1 h_t~q_t~dt ~(s)=0$ for a.e. $s \in \mathbb{R} \backslash [0,1]$, since $\text{supp}(q_t) \subset [0,1]$ for every $0 \leq t \leq 1$.
\end{proof}

\subsection{The Wick product}

Since the space of $U$-functionals is closed under pointwise multiplication, we can define the following in view of Theorem \ref{thmHida}:

\begin{definition} \label{defWick}
For $\Phi, \Psi \in (\mathcal{S})'$ the \textit{Wick product} $\Phi \diamond \Psi \in (\mathcal{S})'$ is given via the $S$-transform $S(\Phi \diamond \Psi)=S\Phi \cdot S\Psi$. 
\end{definition}

We additionally introduce the space of test functions $\mathcal{G}$ and its dual $\mathcal{G}'$. A similar construction as in the case of Hida distributions yields the Gelfand triple $\mathcal{G} \subset L^2(\mu) \subset \mathcal{G}'$ such that
\begin{align*} (\mathcal{S}) \subset \mathcal{G} \subset L^2(\mu) \subset \mathcal{G}' \subset (\mathcal{S})'.
\end{align*}
These spaces were was first introduced and analyzed in \cite{PT95}. A characterization is given in \cite{GKS97}. We would like to remark that elements in $L^2(\mu)$ of the form 
\begin{align} \label{inG} F=\langle f , \cdot \rangle^2 - \Vert f \Vert_{\scriptstyle{L^2(\mathbb{R})}}^2=\langle f^{\otimes 2}, :\cdot^{\otimes 2}:\rangle \quad \text{for } f \in L^2(\mathbb{R}) \end{align}
are contained in $\mathcal{G}$, where $:\cdot^{\otimes n}\hspace{-0.1cm}:$, $n  \in \mathbb{N}$, denotes the Wick ordering. The elements in $\mathcal{G}'$ are called regular distributions and have the property that each chaos of the generalized chaos decomposition is an element in $L^2(\mu)$. For example, $\delta_a(\langle \eta, \cdot \rangle) \in \mathcal{G}'$ for $a \in \mathbb{R}$ and $0 \neq \eta \in L^2(\mathbb{R})$. Moreover, $\mathcal{G}$ is closed under pointwise multiplication. This mulitplication yields a continuous mapping from $\mathcal{G} \times \mathcal{G}$ to $\mathcal{G}$ and can be extended to a continuous mapping from $\mathcal{G} \times \mathcal{G}'$ to $\mathcal{G}'$ by 
\begin{align} \label{productG}  \big\langle \! \! \big\langle G, F \cdot \Phi \big\rangle \! \! \big\rangle:= \big\langle \! \! \big\langle F \cdot G, \Phi \big\rangle \! \! \big\rangle \quad \text{for } F, G \in \mathcal{G} \text{ and }\Phi \in \mathcal{G}',~\text{see \cite{PT95}}.  \end{align}
In particular, the multiplication of Donsker's delta and an element in $\mathcal{G}$ is well-defined. For a presentation of the concepts of regular distributions of white noise analysis see \cite{GKS99}.\\
The following result yields a useful method in order to determine the $S$-transform of this product. For a proof we refer to \cite[Section 4.4]{Vog10}, see also \cite{GRS14} and \cite{GR15} for a refinement giving a pointwise meaning.

\begin{proposition}[Wick product formula] \label{thmWPF}
Let $F \in \mathcal{G}$, $0 \neq \eta \in L^2(\mathbb{R})$ and $a \in \mathbb{R}$. Then,
\begin{align}
F \cdot \delta_a(\langle \eta, \cdot \rangle)=F\Big(\cdot + \big(a - \big\langle \frac{\eta}{\Vert \eta \Vert_{\scriptstyle{L^2(\mathbb{R})}}},~ \cdot ~\big\rangle \big) \frac{\eta}{\Vert \eta \Vert_{\scriptstyle{L^2(\mathbb{R})}}} \Big) \diamond \delta_a(\langle \eta, \cdot \rangle) \in \mathcal{G}'.
\end{align}
\end{proposition}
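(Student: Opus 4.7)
My plan is to prove the identity by showing that both sides have equal $S$-transforms and then invoking the characterization Theorem \ref{thmHida}. The guiding intuition is that multiplying an element of $\mathcal{G}$ by $\delta_a(\langle \eta, \cdot\rangle)$ formally conditions on the event $\{\langle \eta, \cdot\rangle = a\}$, and the Wick identity in the proposition is exactly the Cameron-Martin shift (Proposition \ref{propCM}) that enforces this constraint.

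For the right-hand side, denote $\widetilde F := F(\cdot + (a - \langle \eta/\Vert \eta \Vert_{L^2(\mathbb{R})}, \cdot\rangle)\, \eta/\Vert \eta \Vert_{L^2(\mathbb{R})})$. With $e := \eta/\Vert \eta \Vert_{L^2(\mathbb{R})}$ this shift is simply $\omega \mapsto P_e^{\perp} \omega + a e$, where $P_e^{\perp}$ is the orthogonal projection onto the complement of $\mathbb{R}e$. Since $P_e^{\perp}$ is bounded on $L^2(\mathbb{R})$ and extends to $\mathcal{S}'(\mathbb{R})$, one checks that $\widetilde F \in \mathcal{G}$ (explicitly for $F$ in a dense subclass, and in general via a second-quantization argument on the chaos expansion). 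The defining property of the Wick product then gives
\[ S\bigl(\widetilde F \diamond \delta_a(\langle \eta, \cdot\rangle)\bigr)(\varphi) = S(\widetilde F)(\varphi)\cdot S(\delta_a(\langle \eta, \cdot\rangle))(\varphi), \]
in which the second factor is the explicit Gaussian kernel from Example \ref{exdelta}.

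For the left-hand side, the duality extension (\ref{productG}) yields
\[ S\bigl(F \cdot \delta_a(\langle \eta, \cdot\rangle)\bigr)(\varphi) = \langle\!\langle F \cdot :\exp(\langle \varphi, \cdot\rangle):, \, \delta_a(\langle \eta, \cdot\rangle) \rangle\!\rangle. \]
I would first verify the identity on a dense subclass, specifically Wick exponentials $F = :\exp(\langle \xi, \cdot\rangle):$ with $\xi \in \mathcal{S}(\mathbb{R})$. For such $F$, the product rule $:\exp(\langle \xi, \cdot\rangle): \cdot :\exp(\langle \varphi, \cdot\rangle): = e^{\langle \xi, \varphi\rangle} \, :\exp(\langle \xi + \varphi, \cdot\rangle):$ reduces the LHS to $e^{\langle \xi, \varphi\rangle} \, S(\delta_a(\langle \eta, \cdot\rangle))(\xi + \varphi)$, and the corresponding RHS is evaluated by a completing-the-square Gaussian integral against $\mu$. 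The resulting equality of exponentials is then a routine algebraic check.

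The final step is the extension from this dense subclass to all $F \in \mathcal{G}$ by linearity and separate continuity: multiplication $\mathcal{G} \times \mathcal{G}' \to \mathcal{G}'$ is separately continuous by construction (see \cite{PT95}), and the shift map $F \mapsto \widetilde F$ is continuous on $\mathcal{G}$ because it is the second quantization of the bounded operator $\omega \mapsto P_e^{\perp} \omega + a e$, acting continuously on the chaos expansion kernel-by-kernel. The main technical obstacle lies precisely here: controlling the chaos expansion of $\widetilde F$ uniformly in the sequence of Hilbert spaces that generates the projective topology of $\mathcal{G}$. An alternative route, pursued in \cite{Vog10}, is to verify the identity directly at the level of chaos kernels via explicit tensor contractions and then recombine.
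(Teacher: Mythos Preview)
The paper does not supply its own proof of this proposition; it simply refers the reader to \cite[Section~4.4]{Vog10} (with pointwise refinements in \cite{GRS14} and \cite{GR15}). So there is no in-paper argument to compare against directly.

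Your strategy---check equality of $S$-transforms on the total set of Wick exponentials $:\exp(\langle\xi,\cdot\rangle):$, then extend by linearity and continuity---is a sound and standard one, and the Gaussian computation you outline for this subclass does go through. You also correctly isolate the only genuinely nontrivial step: continuity of the shift $F\mapsto\widetilde F=F(P_e^{\perp}\cdot+ae)$ on $\mathcal{G}$. Your proposed justification via second quantization of the bounded $L^2$-operator $P_e^{\perp}$ together with Cameron--Martin translation is the right idea; since the topology of $\mathcal{G}$ is governed by $L^2$-norms of the chaos kernels (not the $\mathcal{H}_p$-norms underlying $(\mathcal{S})$), this argument can be made rigorous. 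The route in \cite{Vog10}, which you allude to at the end, instead computes both sides chaos-by-chaos via explicit tensor contractions, trading the abstract continuity argument for heavier but more self-contained combinatorics. Both approaches are valid; yours is more conceptual, the cited one more constructive.
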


\section{Construction and approximation of the distributions} \label{seclimit}

Let $\rho \in C_c^{\infty}(-1,1)$ such that $\rho \geq 0$, $\int_{-1}^1 \rho~dx=1$ and $\rho(x)=\rho(-x)$ for every $x \in \mathbb{R}$. Then, the family $(\rho_{\varepsilon})_{\varepsilon >0}$ of smooth, symmetric mollifiers is defined by 
\[ \rho_{\varepsilon}(x) := \frac{1}{\varepsilon}~\rho(\frac{x}{\varepsilon}) \quad \text{for } x \in \mathbb{R} \text{ and } \varepsilon >0.\]
For $g \in L^2(0,1)$ and $0 \leq t \leq 1$ define
\[ g_{\varepsilon,t} := (\rho_{\varepsilon} * g)_t:=\int_0^t \rho_{\varepsilon}(s-t) g_s~ds \]
with derivative  
\[ \dot{g}_{\varepsilon,t}=\frac{d}{dt} g_{\varepsilon,t}=-\int_0^t \rho'_{\varepsilon}(s-t) g_s~ds. \]
Let $BB=(BB_t)_{0 \leq t \leq 1}$ be the Brownian bridge from $0$ to $0$ as defined in (\ref{BBWN}). In particular, $BB_t=\langle q_t,\cdot \rangle$ in $L^2(\mu)$ for $0 \leq t \leq 1$. Then, the pathwise smoothing of $BB$ is given by
\begin{align}
BB_{\varepsilon,t}=(\rho_{\varepsilon} * BB)_t = \int_0^t \rho_{\varepsilon}(s-t) BB_s ~ds
\end{align}
for $0 \leq t \leq 1$ and consequently, the pathwise derivative of $(BB_{\varepsilon,t})_{0 \leq t \leq 1}$ with respect to $t$ is given by
\begin{align}
\dot{BB}_{\varepsilon,t}:=\frac{d}{dt}BB_{\varepsilon,t}=(-\rho'_{\varepsilon} * BB)_t=- \int_0^1 \rho'_{\varepsilon}(s-t) BB_s ~ds.
\end{align}
Define
\[ :\dot{BB}_{\varepsilon,t}^2: ~:= \dot{BB}_{\varepsilon,t}^2 - \mathbb{E}(\dot{BB}_{\varepsilon,t}^2) \ \text{ and } \ \Gamma_{\varepsilon,t}:= ~:\dot{BB}^2_{\varepsilon,t}: \Big(\cdot-\langle q_t,\cdot \rangle \frac{q_t}{\Vert q_t \Vert^2_{\scriptstyle{L^2(\mathbb{R})}}}\Big) -1 \quad \text{for } 0 < t < 1, ~\varepsilon >0. \]
Note that $\mathbb{E}(\dot{BB}_{\varepsilon,t}^2)=\frac{\Vert \rho \Vert_{\scriptstyle{L^2(\mathbb{R})}}^2}{\varepsilon} - 1$ for $t \in (\varepsilon,1-\varepsilon)$ and hence, $:\dot{BB}_{\varepsilon,t}^2:-1=\dot{BB}_{\varepsilon,t}^2 -\frac{\Vert \rho \Vert_{\scriptstyle{L^2(\mathbb{R})}}^2}{\varepsilon}$ (which is in analogy to the definition of $:\dot{B}^2_{\varepsilon,t}:$ in \cite{Zam05} in the case of a Brownian motion). In particular, it holds
\begin{align*} \dot{BB}_{\varepsilon,t}(\omega)= -\int_0^1 \rho'_{\varepsilon}(s-t) BB_s(\omega) ~ds=\Big\langle -\int_0^1 \rho'_{\varepsilon}(s-t) q_s~ds, \omega \Big\rangle
\end{align*}
for $\mu$-a.e. $\omega \in \mathcal{S}'(\mathbb{R})$ by Lemma \ref{lemma1}, where the integral on the right hand side is defined in the sense of a Bochner integral with values in $L^2(\mathbb{R})$. Thus, by (\ref{inG}) it follows $:\dot{BB}_{\varepsilon,t}^2: -1 \in \mathcal{G}$, since
\begin{align*} :\dot{BB}_{\varepsilon,t}^2:(\omega)=\Big\langle \Big(-\int_0^1 \rho'_{\varepsilon}(s-t) q_s~ds\Big)^{\otimes 2}, :\omega^{\otimes 2}: \Big\rangle \quad \text{for } ~\mu\text{-a.e.}~ \omega \in \mathcal{S}'(\mathbb{R}).
\end{align*}

\begin{lemma} \label{lem1}
Let $0 < t <1$, $\varepsilon >0$ and $\varphi \in \mathcal{S}(\mathbb{R})$. Then, 
\[ \Gamma_{\varepsilon,t} \diamond \delta_0(|BB_t|)= \big( :\dot{BB}_{\varepsilon,t}^2: -1 \big) \cdot \delta_0(|BB_t|) \in \mathcal{G}'. \]
Furthermore, for $t \in (\varepsilon, 1- \varepsilon)$ the $S$-transform of $\Gamma_{\varepsilon,t}$ is given by 
\[ S(\Gamma_{\varepsilon,t})(\varphi)=\lambda(\dot{H}^{\varphi}_{\varepsilon,t},-H^{\varphi}_t,t), \]
where $H^{\varphi}_t:=\int_0^1 q_t(s)\varphi_s~ds$ (see (\ref{defH})) and
\[ \lambda(x,y,t):=\Big(x + \frac{1}{2}~ y ~\frac{1-2t}{t -t^2} ~\Big)^2 - \frac{1}{4} \frac{1}{t -t^2} \quad \text{for } x,y \in \mathbb{R},~0 <t <1.\]
\end{lemma}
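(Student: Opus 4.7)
The first assertion reduces to the Wick product formula (Proposition~\ref{thmWPF}). Since $\delta_0(|BB_t|)=\delta_0(BB_t)=\delta_0(\langle q_t,\cdot\rangle)$ by Example~\ref{exdelta} (symmetry at zero) and $F:=\,:\dot{BB}_{\varepsilon,t}^2:-1\in\mathcal{G}$ as noted just before the lemma, applying Proposition~\ref{thmWPF} with $\eta=q_t$ and $a=0$ yields
\[ F\cdot\delta_0(|BB_t|)=F\Bigl(\,\cdot-\langle q_t,\cdot\rangle\frac{q_t}{\|q_t\|_{L^2(\mathbb{R})}^2}\Bigr)\diamond\delta_0(|BB_t|). \]
The constant $-1$ is translation invariant, so the argument of the diamond product coincides with $\Gamma_{\varepsilon,t}$ by definition, proving the first identity.

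For the $S$-transform formula I will work in $L^2(\mu)$. Using the Bochner-integral analogue of Lemma~\ref{lemma1}, write $\dot{BB}_{\varepsilon,t}=\langle f_{\varepsilon,t},\cdot\rangle$ in $L^2(\mu)$ with $f_{\varepsilon,t}:=-\int_0^1\rho'_\varepsilon(s-t)q_s\,ds\in L^2(\mathbb{R})$. Since the isometry~(\ref{isometry}) gives $:\dot{BB}_{\varepsilon,t}^2:\,=\langle f_{\varepsilon,t},\cdot\rangle^2-\|f_{\varepsilon,t}\|_{L^2(\mathbb{R})}^2$, introducing $g_{\varepsilon,t}:=f_{\varepsilon,t}-\frac{\langle f_{\varepsilon,t},q_t\rangle}{\|q_t\|^2}q_t$ (the $L^2(\mathbb{R})$-projection of $f_{\varepsilon,t}$ orthogonal to $q_t$) produces the representation
\[ \Gamma_{\varepsilon,t}=\langle g_{\varepsilon,t},\cdot\rangle^2-\|f_{\varepsilon,t}\|_{L^2(\mathbb{R})}^2-1\in L^2(\mu). \]
Hence by Cameron--Martin (Proposition~\ref{propCM}), $S(\Gamma_{\varepsilon,t})(\varphi)=\mathbb{E}[\Gamma_{\varepsilon,t}(\cdot+\varphi)]$. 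Expanding the shifted square and using that $\langle g_{\varepsilon,t},\cdot\rangle$ is a centered Gaussian of variance $\|g_{\varepsilon,t}\|^2=\|f_{\varepsilon,t}\|^2-\langle f_{\varepsilon,t},q_t\rangle^2/\|q_t\|^2$, the divergent quantity $\|f_{\varepsilon,t}\|^2$ cancels and I obtain
\[ S(\Gamma_{\varepsilon,t})(\varphi)=\langle g_{\varepsilon,t},\varphi\rangle^2-\frac{\langle f_{\varepsilon,t},q_t\rangle^2}{\|q_t\|_{L^2(\mathbb{R})}^2}-1. \]

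Next the two inner products will be evaluated explicitly. A direct computation gives $\|q_t\|_{L^2(\mathbb{R})}^2=t-t^2$. For $\langle f_{\varepsilon,t},q_t\rangle$ I insert the Brownian-bridge covariance $\langle q_s,q_t\rangle_{L^2(\mathbb{R})}=\min(s,t)-st$, so that $\langle f_{\varepsilon,t},q_t\rangle=-\int_0^1\rho'_\varepsilon(s-t)(\min(s,t)-st)\,ds$. For $t\in(\varepsilon,1-\varepsilon)$ the support of $\rho'_\varepsilon(\cdot-t)$ lies in $(0,1)$, so integration by parts (the weak $s$-derivative of $s\mapsto\min(s,t)-st$ on $(0,1)$ equals $\mathbbm{1}_{[0,t)}(s)-t=q_t(s)$) yields
\[ \langle f_{\varepsilon,t},q_t\rangle=\int_0^1\rho_\varepsilon(s-t)\,q_t(s)\,ds=\tfrac{1}{2}-t=\tfrac{1-2t}{2}, \]
where I used $\int_{-\varepsilon}^0\rho_\varepsilon=\tfrac{1}{2}$ by symmetry of $\rho$.

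Combining everything, $\langle g_{\varepsilon,t},\varphi\rangle=\dot{H}^\varphi_{\varepsilon,t}-\frac{1-2t}{2(t-t^2)}H^\varphi_t$, which matches $\dot{H}^\varphi_{\varepsilon,t}+\tfrac{1}{2}(-H^\varphi_t)\tfrac{1-2t}{t-t^2}$, the inner bracket of $\lambda(\dot{H}^\varphi_{\varepsilon,t},-H^\varphi_t,t)$. Finally the identity $(1-2t)^2+4(t-t^2)=1$ collapses the constant via $-\tfrac{(1-2t)^2}{4(t-t^2)}-1=-\tfrac{1}{4(t-t^2)}$, yielding $S(\Gamma_{\varepsilon,t})(\varphi)=\lambda(\dot{H}^\varphi_{\varepsilon,t},-H^\varphi_t,t)$ as claimed. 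The delicate point of the argument is the cancellation of the divergent $\|f_{\varepsilon,t}\|^2\sim\|\rho\|_{L^2}^2/\varepsilon$: it is precisely the Wick ordering together with the extra constant $-1$ built into $\Gamma_{\varepsilon,t}$ that achieves this cancellation, in line with the identity $:\dot{BB}_{\varepsilon,t}^2:-1=\dot{BB}_{\varepsilon,t}^2-\|\rho\|_{L^2}^2/\varepsilon$ stated just before the lemma; once this is isolated, the rest is elementary Gaussian algebra.
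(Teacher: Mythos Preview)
Your proof is correct and follows essentially the same route as the paper: the first assertion is the Wick product formula applied to $F=\,:\dot{BB}_{\varepsilon,t}^2:-1\in\mathcal{G}$ and $\delta_0(\langle q_t,\cdot\rangle)$, and the $S$-transform computation is the same Gaussian algebra. The only cosmetic difference is that you package the shift $\omega\mapsto\omega-\langle q_t,\omega\rangle q_t/\|q_t\|^2$ as the orthogonal projection $f_{\varepsilon,t}\mapsto g_{\varepsilon,t}$ and then invoke Cameron--Martin, whereas the paper expands $F(\tilde\omega)$ directly into second Wick chaos monomials and reads off their $S$-transforms; your formulation makes the cancellation of $\|f_{\varepsilon,t}\|^2$ a bit more transparent, but the two computations are line-for-line equivalent.
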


\begin{proof}
Define 
\[F:=~:\dot{BB}_{\varepsilon,t}^2: -1 \in \mathcal{G} \quad \text{for } 0<t<1, ~\varepsilon >0.\]
The first part of the assertion follows by Proposition \ref{thmWPF}. \\
It remains to determine the $S$-transforms of $\Gamma_{\varepsilon,t}=F(\cdot  - \langle q_t, \cdot \rangle \frac{q_t}{\Vert q_t \Vert_{\scriptstyle{L^2(\mathbb{R})}}^2})$ for $t \in (\varepsilon,1-\varepsilon)$. In this case, it holds $F=\dot{BB}_{\varepsilon,t}^2 - \frac{\Vert \rho \Vert_{\scriptstyle{L^2(\mathbb{R})}}^2}{\varepsilon}$ and calculation yields for $\mu$-a.e. $\omega \in \mathcal{S}'(\mathbb{R})$
\begin{align*}
&F\Big(\omega  - \langle q_t, \omega \rangle \frac{q_t}{\Vert q_t \Vert_{\scriptstyle{L^2(\mathbb{R}})}^2}\Big) \\
&=\Big\langle - \int_0^1 \rho'_{\varepsilon}(s-t) q_s~ds, \omega - \langle q_t, \omega \rangle \frac{q_t}{\Vert q_t \Vert_{\scriptstyle{L^2(\mathbb{R})}}^2} \Big\rangle^2  - \frac{\Vert \rho \Vert_{\scriptstyle{L^2(\mathbb{R})}}^2}{\varepsilon} \\
&=\Big\langle \Big(- \int_0^1 \rho'_{\varepsilon}(s-t) q_s~ds \Big)^{\otimes 2}, :\omega^{\otimes 2}: \Big\rangle - 2 \gamma(t)~ \Big\langle \Big(- \int_0^1 \rho'_{\varepsilon}(s-t) q_s~ds \Big) \otimes q_t, :\omega^{\otimes 2}:\Big\rangle \\
&~+ \gamma(t)^2 ~\big\langle q_t^{\otimes 2}, :\omega^{\otimes 2}: \big\rangle - (t- t ^2)~\gamma(t)^2 -1,
\end{align*}
where 
\[\gamma(t):=\frac{1}{\Vert q_t \Vert_{\scriptstyle{L^2(\mathbb{R})}}^2} \int_0^1 -\int_0^1 \rho_{\varepsilon}(s-t) q_s(u) ds~q_t(u)~du=\frac{1}{2} \frac{1-2t}{t-t^2}\]
with $\Vert q_t \Vert_{\scriptstyle{L^2(\mathbb{R})}}^2=t-t^2$. Thus, for $\varphi \in \mathcal{S}(\mathbb{R})$ holds in use of Lemma \ref{lemma2}
\begin{align*}
S\Big(F \big(\cdot  - \langle q_t, \cdot \rangle \frac{q_t}{\Vert q_t \Vert_{\scriptstyle{L^2(\mathbb{R})}}^2} \big)\Big)(\varphi)&=\Big( \int_0^1 -\int_0^1 \rho'_{\varepsilon}(t-s)q_s(u) ~ds~ \varphi_u du \Big)^2 \\
&~- 2 \gamma(t) ~\Big( \int_0^1 - \int_0^1 \rho'_{\varepsilon}(t-s) q_s(u)~ds~ \varphi_u~du \Big) \Big( \int_0^1 q_t(u) \varphi_u~du\Big)\\
&~+ \gamma(t)^2 ~\Big(\int_0^1 q_t(u) \varphi_u~du \Big)^2 -(t- t ^2)~\gamma(t)^2 -1 \\
&= (\dot{H}^{\varphi}_{\varepsilon,t})^2 -2  \gamma(t) \dot{H}^{\varphi}_{\varepsilon,t}~H^{\varphi}_t + \gamma(t)^2~(H^{\varphi}_t)^2 -(t- t ^2)~\gamma(t)^2 -1 \\
&= \lambda(\dot{H}^{\varphi}_{\varepsilon,t},-H^{\varphi}_t,t).
\end{align*}
\end{proof}

\begin{theorem} \label{thmlimit}
For each $0 < t <1$ $(\Gamma_{\varepsilon,t})_{\varepsilon >0}$ with $\Gamma_{\varepsilon,t} \in \mathcal{G}'$ for each $\varepsilon >0$ converges in $(\mathcal{S})'$ as $\varepsilon \rightarrow 0$ to the Hida distribution $\Gamma_t$ with $S$-transform given by
\[ S(\Gamma_t)(\varphi)=\lambda(\dot{H}^{\varphi}_{t},-H^{\varphi}_t,t) \quad \text{for } \varphi \in \mathcal{S}'(\mathbb{R}).\]
Let $h \in C_c^2(0,1)$ and $\varepsilon >0$ such that $\textnormal{supp}(h) \subset (\varepsilon,1-\varepsilon)$. Then, it exists the integral
\[ \Phi_{\varepsilon,h}:=\int_0^1 h_t ~\Gamma_{\varepsilon,t} \diamond \delta_0 (|BB_t|)~dt \]
in $(\mathcal{S})'$ with $S$-transform
\[ S(\Phi_{\varepsilon,h})(\varphi)=\int_0^1 h_t~\frac{\lambda(\dot{H}^{\varphi}_{\varepsilon,t},-H^{\varphi}_t,t)}{\sqrt{2 \pi (t-t^2)}} \exp\Big(-\frac{(H^{\varphi}_t)^2}{2(t-t^2)} \Big)~dt \quad \text{for } \varphi \in \mathcal{S}(\mathbb{R}), \]
where $H$, $\lambda$ are given as in Lemma \ref{lem1}. Moreover, the integral
\[ \Phi_{h}:=\int_0^1 h_t ~\Gamma_t \diamond \delta_0 (|BB_t|)~dt \]
exists in $(\mathcal{S})'$ with $S$-transform given by
\[ S(\Phi_h)(\varphi)=\int_0^1 h_t~\frac{\lambda(\dot{H}^{\varphi}_{t},-H^{\varphi}_t,t)}{\sqrt{2 \pi (t-t^2)}} \exp\Big(-\frac{(H^{\varphi}_t)^2}{2(t-t^2)} \Big)~dt \quad \text{for } \varphi \in \mathcal{S}(\mathbb{R})\]
and $(\Phi_{\varepsilon,h})_{\varepsilon >0}$ converges in $(\mathcal{S})'$ to  $\Phi_h$ as $\varepsilon \rightarrow 0$. 
\end{theorem}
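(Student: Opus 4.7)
The four assertions of the theorem all reduce to pointwise identification of $S$-transforms together with a uniform exponential bound, and then invoking the characterization-via-$U$-functionals machinery (Theorems \ref{thmHida}, \ref{thmconv} and \ref{thmint}). My plan is to handle the four statements in order, reusing the same bound throughout.

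\textbf{Step 1: Convergence $\Gamma_{\varepsilon,t} \to \Gamma_t$.} Fix $0<t<1$ and take $\varepsilon$ small enough that $t \in (\varepsilon, 1-\varepsilon)$. By Lemma \ref{lem1}, $S(\Gamma_{\varepsilon,t})(\varphi) = \lambda(\dot H^{\varphi}_{\varepsilon,t}, -H^{\varphi}_t, t)$. Since $\varphi \in \mathcal{S}(\mathbb{R})$ is smooth, standard properties of the symmetric mollifier family $(\rho_{\varepsilon})$ — applied after integration by parts so that the derivative falls on $\varphi$ rather than on $\rho_{\varepsilon}$ — give $\dot H^{\varphi}_{\varepsilon,t} \to \dot H^{\varphi}_t = \varphi_t - \int_0^1 \varphi_s \, ds$ as $\varepsilon \to 0$, while $H^{\varphi}_t$ is $\varepsilon$-independent. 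Hence $S(\Gamma_{\varepsilon,t})(\varphi) \to \lambda(\dot H^{\varphi}_t, -H^{\varphi}_t, t)$ pointwise. For the uniform bound required by Theorem \ref{thmconv}(ii), the homogeneity $H^{z\varphi}_t = z H^{\varphi}_t$, $\dot H^{z\varphi}_{\varepsilon,t} = z \dot H^{\varphi}_{\varepsilon,t}$ and the quadratic nature of $\lambda$ reduce the task to showing $|\dot H^{\varphi}_{\varepsilon,t}|, |H^{\varphi}_t| \leq C \|\varphi\|_p$ uniformly in $\varepsilon$ for some $p \in \mathbb{N}_0$; this follows from Schwartz control of the sup norm after moving the derivative off $\rho_\varepsilon$ via integration by parts. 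The resulting polynomial bound $C_t(1 + |z|^2 \|\varphi\|_p^2)$ is dominated by $A \exp(B |z|^2 \|\varphi\|_p^2)$, so Theorem \ref{thmconv} yields the existence of $\Gamma_t$ with the claimed $S$-transform.

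\textbf{Step 2: Existence of the Hida-integrals $\Phi_{\varepsilon,h}$ and $\Phi_h$.} By Definition \ref{defWick}, the $S$-transform of the integrand factorizes, and combining Lemma \ref{lem1} (or Step 1 for $\Phi_h$) with the formula for $S\delta_0(BB_t)$ recalled in Example \ref{exdelta} (together with the identity $\delta_0(|BB_t|) = \delta_0(BB_t)$) gives exactly the integrands displayed in the theorem. To invoke Theorem \ref{thmint} on $([0,1],\mathcal B([0,1]),dt)$, measurability in $t$ is clear from continuity. For the exponential bound: because $h \in C_c^2(0,1)$ has compact support in the open interval, the effective range of $t$ stays away from the endpoints, so $\tfrac{1}{t-t^2}$ is bounded on $\mathrm{supp}(h)$. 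Extending analytically to $z \in \mathbb{C}$, the exponential factor is controlled by $|\exp(-\tfrac{z^2 (H^{\varphi}_t)^2}{2(t-t^2)})| \leq \exp(|z|^2 C_h \|\varphi\|_p^2)$, while the $\lambda$-factor grows at most quadratically in $z$; the product is dominated by $A(t) \exp(B(t) |z|^2 \|\varphi\|_p^2)$ with $A$ integrable and $B$ essentially bounded, uniformly in $\varepsilon$. Theorem \ref{thmint} then gives both integrals in $(\mathcal{S})'$ with the $S$-transforms computed by interchanging $S$ and integration.

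\textbf{Step 3: Convergence $\Phi_{\varepsilon,h} \to \Phi_h$.} Pointwise convergence of the $S$-transforms is obtained by dominated convergence inside the $t$-integral, using Step 1 (which gives the integrand convergence for each $t$) together with the $\varepsilon$-uniform dominator from Step 2 restricted to $\mathrm{supp}(h)$. The very same $\varepsilon$-uniform exponential bound from Step 2 supplies hypothesis (ii) of Theorem \ref{thmconv}, and the convergence $\Phi_{\varepsilon,h} \to \Phi_h$ in $(\mathcal{S})'$ follows.

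\textbf{Main obstacle.} The technical heart of the argument is securing the uniform-in-$\varepsilon$ exponential bound in Step 1. The delicate point is that $\dot H^{\varphi}_{\varepsilon,t}$ nominally involves $\rho_{\varepsilon}'$, whose $L^1$-norm blows up like $1/\varepsilon$; the bound $|\dot H^{\varphi}_{\varepsilon,t}| \leq C \|\varphi\|_p$ is only achievable after integrating by parts to shift the derivative onto the smooth Schwartz function $\varphi$, at which point the mollification acts harmlessly. Once this is in place, Steps 2 and 3 are essentially routine applications of Theorems \ref{thmint} and \ref{thmconv}, the compact support of $h$ killing the only remaining singularities at $t = 0, 1$.
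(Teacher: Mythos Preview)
Your proposal is correct and follows essentially the same route as the paper: pointwise identification of the $S$-transforms via Lemma \ref{lem1} and Example \ref{exdelta}, together with $\varepsilon$-uniform exponential bounds, feeding into Theorems \ref{thmconv} and \ref{thmint}. The paper handles your ``main obstacle'' in exactly the way you describe (though without calling it an obstacle): it uses the sup-norm estimates $|H^{\varphi}_t| \leq 2t\,\Vert \varphi \Vert_{\infty}$ and $|\dot H^{\varphi}_{\varepsilon,t}| \leq 2\Vert \varphi \Vert_{\infty}$, the second of which amounts to your integration by parts (i.e.\ recognizing $\dot H^{\varphi}_{\varepsilon,t} = \rho_{\varepsilon} * \dot H^{\varphi}$ for $t \in (\varepsilon,1-\varepsilon)$), and then controls $\Vert \varphi \Vert_{\infty}$ by some $\Vert \varphi \Vert_p$.
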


\begin{proof}
Note that $\Vert \varphi \Vert_{\infty} \leq C \Vert \varphi \Vert_{p}$ for some $p \in \mathbb{N}$, a constant $0 < C < \infty$ and every $\varphi \in \mathcal{S}(\mathbb{R})$, since $\Vert \cdot \Vert_{\infty}$ is continuous on $\mathcal{S}(\mathbb{R})$. Thus, it is sufficient to estimate the involved $S$-transforms in use of the norm $\Vert \cdot \Vert_{\infty}$. \\
For the first part of the claim, we apply Theorem \ref{thmconv}. For $\varphi \in \mathcal{S}(\mathbb{R})$, $0 < t <1$ and $\varepsilon$ small enough such that $t \in (\varepsilon,1-\varepsilon)$ we conclude by Lemma \ref{lem1}, the continuity of $\lambda$ and the definition of $H$ and its regularization that
\[ S(\Gamma_{\varepsilon,t})(\varphi)=\lambda(\dot{H}^{\varphi}_{\varepsilon,t},-H^{\varphi}_t,t) \rightarrow \lambda(\dot{H}^{\varphi}_{t},-H^{\varphi}_t,t) \quad \text{as } \varepsilon \rightarrow 0.\]
Furthermore, for $z \in \mathbb{C}$ holds
\[  \left|\lambda(\dot{H}^{z \varphi}_{\varepsilon,t},-H^{z \varphi}_t,t) \right| = \left|\lambda(z\dot{H}^{ \varphi}_{\varepsilon,t},-zH^{\varphi}_t,t)\right|  \leq \Big(8 + 2 t^2 \frac{(1-2t)^2}{(t-t^2)^2} + \frac{1}{4} \frac{1}{t-t^2} \Big)~\exp \big(|z|^2~\Vert \varphi \Vert_{\infty}^2 \big),\]
where we used $|H_t^{\varphi}| \leq 2t ~\Vert \varphi \Vert_{\infty}$, $|\dot{H}_{\varepsilon,t}^{\varphi}| \leq 2 \Vert \varphi \Vert_{\infty}$ and the inequalities $(a+b)^2 \leq 2a^2+2b^2$, $a,b \in \mathbb{R}$, $\max \{1,c\} \leq \exp(c)$, $c \in (0,\infty)$. \\
For the second assertion, we apply Theorem \ref{thmint} with $(\Omega,\mathcal{F},m)=((0,1), \mathcal{B}(0,1),dx)$. Clearly, in use of Lemma \ref{lem1} the mapping
\[ (0,1) \ni t \mapsto S(h_t ~\Gamma_{\varepsilon,t} \diamond \delta_0(|BB_t|))(\varphi)= h_t~\frac{\lambda(\dot{H}^{\varphi}_{\varepsilon,t},-H^{\varphi}_t,t)}{\sqrt{2 \pi (t-t^2)}} \exp\Big(-\frac{(H^{\varphi}_t)^2}{2(t-t^2)} \Big)  \]
is measurable for every $\varphi \in \mathcal{S}(\mathbb{R})$. Moreover, for $z=z_1 + i z_2 \in \mathbb{C}$, $z_1,z_2 \in \mathbb{R}$, holds
\begin{align*} &\left|  h_t~\frac{\lambda(\dot{H}^{z \varphi}_{\varepsilon,t},-H^{z \varphi}_t,t)}{\sqrt{2 \pi (t-t^2)}} \exp\Big(-\frac{(H^{z \varphi}_t)^2}{2(t-t^2)} \Big) \right| \\
= &\left|h_t~\frac{\lambda(z\dot{H}^{ \varphi}_{\varepsilon,t},-zH^{\varphi}_t,t)}{\sqrt{2 \pi (t-t^2)}} \right| \left|\exp\Big(-\frac{z^2~(H^{\varphi}_t)^2}{2(t-t^2)} \Big) \right| 
\leq \left|h_t~\frac{\lambda(z\dot{H}^{ \varphi}_{\varepsilon,t},-zH^{\varphi}_t,t)}{\sqrt{2 \pi (t-t^2)}} \right| \exp\Big(|z|^2~\frac{(H^{\varphi}_t)^2}{2(t-t^2)} \Big) \\
&\hspace{7.8cm} \leq A(t) \exp\Big( |z|^2 ~\Vert \varphi \Vert_{\infty}^2 B(t) \Big)
\end{align*}
with
\[ A(t):=|h_t| ~\frac{1}{\sqrt{2 \pi (t-t^2)}}~\big(8 + 2 t^2 \frac{(1-2t)^2}{(t-t^2)^2} + \frac{1}{4} \frac{1}{t-t^2}\big) \quad \text{ and } \quad B(t):=\frac{2t^2}{t-t^2} + 1 \]
similarly as above. Since $h$ has compact support in $(0,1)$, we can assume that $t \in (\delta,1-\delta)$ for some $\delta >0$ and adapt $A$, $B$ such that $A \in L^1(0,1)$ and $B \in L^{\infty}(0,1)$. Note that the estimate is independent of $\varepsilon$ and holds also for the limit case with $\Gamma_{\varepsilon,t}$ replaced by $\Gamma_t$, since $\dot{H}_t^{\varphi}=\varphi_t -\int_0^1 \varphi_s ~ds$ fufills also the estimate $|\dot{H}_t^{\varphi}| \leq 2 \Vert \varphi \Vert_{\infty}$.\\
For the last assertion, we use again Theorem \ref{thmconv}. It holds
\[ S(\Phi_{\varepsilon,h})(\varphi)=\int_0^1 h_t~\frac{\lambda(\dot{H}^{\varphi}_{\varepsilon,t},-H^{\varphi}_t,t)}{\sqrt{2 \pi (t-t^2)}} \exp\Big(-\frac{(H^{\varphi}_t)^2}{2(t-t^2)} \Big)~dt \]
for $\varphi \in \mathcal{S}(\mathbb{R})$ and $\varepsilon >0$. By the previous part of the proof, for every $0 < t <1$ the modulus of the integrand is bounded by $A(t) \exp(\Vert \varphi \Vert_{\infty} B(t))$, which defines an element of $L^1(0,1)$, since $A \in L^1(0,1)$ and $B \in L^{\infty}(0,1)$. Moreover, $\dot{H}^{\varphi}_{\varepsilon,t} \rightarrow \dot{H}^{\varphi}_{t}$ as $\varepsilon \rightarrow 0$ for every $0 < t < 1$. Hence, by Lebesgue dominated convergence follows
\[ S(\Phi_{\varepsilon,h})(\varphi) \rightarrow \int_0^1 h_t~\frac{\lambda(\dot{H}^{\varphi}_{t},-H^{\varphi}_t,t)}{\sqrt{2 \pi (t-t^2)}} \exp\Big(-\frac{(H^{\varphi}_t)^2}{2(t-t^2)} \Big)~dt \quad \text{as } \varepsilon \rightarrow 0. \]
Finally, the same bound yields the estimate
\[ | S(\Phi_{\varepsilon,h})(z\varphi)| \leq \Vert A \Vert_{\scriptstyle{L^1(0,1)}}~ \exp(|z|^2 \Vert \varphi \Vert_{\infty} \Vert B \Vert_{\scriptstyle{L^{\infty}(0,1)}}) \quad \text{for } z \in \mathbb{C}.\]
\end{proof}

\begin{remark} \label{remark}
Note that in Theorem \ref{thmlimit} the involved $S$-transforms at $\varphi \in \mathcal{S}(\mathbb{R})$ only depend on the values of the test function on $[0,1]$.\\ Moreover,
\[\Phi_{\varepsilon,h}=\int_0^1 h_t ~(:\dot{BB}_{\varepsilon,t}^2: -1) \cdot \delta_0 (|BB_t|)~dt =\int_0^1 h_t ~(:\dot{BB}_{\varepsilon,t}^2: -1) ~dl_t^0\]
for $\varepsilon >0$ such that $\textnormal{supp}(h) \subset (\varepsilon,1-\varepsilon)$ with $(l_t^0)_{0 \leq t \leq 1}$ the central local time of the modulus of the Brownian bridge (as introduced in Example \ref{exdelta}). This equality holds in the space $(\mathcal{S})'$, where 
\[ \mathcal{S}(\mathbb{R}) \ni \varphi \mapsto \mathbb{E} \big( :\exp(\langle \varphi, \cdot \rangle):~\int_0^1 h_t ~(:\dot{BB}_{\varepsilon,t}^2: -1) ~dl_t^0 \big) \]
defines a Hida distribution, and as explained in Example \ref{exdelta} the proof of \cite[Proposition 3.1]{Zam05} shows that this expectation equals $S(\Phi_{\varepsilon,h})(\varphi)$ for $\varepsilon >0$ such that $\textnormal{supp}(h) \subset (\varepsilon,1-\varepsilon)$.  
\end{remark}

\section{A first step towards the integration by parts formula} \label{sectIBP}

In this section, we use It{\^o}'s formula in order to connect the distribution constructed in Section \ref{seclimit} with the expectation of the directional derivative in the directions $h \cdot \text{sgn}(BB)$, $h \in C_c^2(0,1)$, for exponential functions depending on the Brownian bridge $BB$. Note that in general the above directions are not from the Cameron-Martin space of the Brownian bridge. Therefore, the corresponding divergence on the Wiener space can not be computed with the classical theory of the Malliavin calculus. We apply the ideas of \cite[Section 3]{Zam05} to $BB$ which causes additional problems due to the more complicated semimartingale structure.\\

Define the covariance operator $Q:L \rightarrow L$ by 
\begin{align} \label{covariance} (Q\eta)_t:=\int_0^1 (t \wedge s - ts)~ \eta_s~ds \quad \text{for } \eta \in L=L^2(0,1) \end{align}
and moreover, $K:L \rightarrow L, \eta \mapsto K\eta$ by $(K\eta)_t:=\int_t^1 \eta_s~ ds - \int_0^1 s~\eta_s~ds$ for $0 \leq t \leq 1$ (compare to Lemma \ref{lemma2} for the special case $\eta \in C[0,1]$).\\

Let $h \in C_c^2(0,1)$, $\eta \in C[0,1]$ and $\Phi_{\eta}:= \exp(-\frac{1}{2}(Q\eta,\eta)_{\scriptstyle{L}}) ~\exp((\eta,\cdot)_{\scriptstyle{L}})$. We have for $\mu$-a.e. $\omega \in \mathcal{S}'(\mathbb{R})$
\begin{align} \partial_{h \cdot \text{sgn}(BB(\omega))} \Phi_{\eta}(BB(\omega))&:= \lim_{\varepsilon \rightarrow 0} \frac{\Phi_{\eta}(BB(\omega)+\varepsilon ~h ~ \text{sgn}(BB(\omega))) - \Phi_{\eta}(BB(\omega))}{\varepsilon} \label{gateaux} \\
&= (\eta,h~ \text{sgn}(BB(\omega)))_{\small{L}}~\Phi_{\eta}(BB(\omega)), \notag
\end{align}
where $\text{sgn}:= \mathbbm{1}_{(0,\infty)} - \mathbbm{1}_{(-\infty,0]}$.
Moreover, we can conclude by Lemma \ref{lemma1} and Lemma \ref{lemma2} that $(\eta,BB(\omega))_{\scriptstyle{L}}=\langle K\eta, \omega\rangle$ for $\mu$-a.e. $\omega \in \mathcal{S}'(\mathbb{R})$ by extending $K\eta$ to $\mathbb{R}$ by zero. Furthermore, $(Q\eta,\eta)_{\scriptstyle{L}}=(K\eta,K\eta)_{\scriptstyle{L}}$, since $(K\eta)'=-\eta$ and $\int_0^1 (K\eta)_t~dt=0$. Thus, for $\mu$-a.e. $\omega \in \mathcal{S}'(\mathbb{R})$ holds
\begin{align} \Phi_{\eta}(BB(\omega))=\exp(-\frac{1}{2}(K\eta,K\eta)_{\scriptstyle{L}}) ~\exp(\langle K\eta, \omega \rangle)=~:\exp(\langle K\eta, \omega \rangle):. \label{relation}
\end{align}
Consequently, by Proposition \ref{propCM} and the equality $(HK\eta)_t=(q_t,K\eta)_{\scriptstyle{L}}=(Q\eta)_t$, where $H$ is defined in (\ref{defH}), it follows
\begin{align*} \mathbb{E}\big( \partial_{h \cdot \text{sgn}(BB)} \Phi_{\eta}(BB)\big)&=\mathbb{E}\big(:\exp(\langle K\eta, \cdot \rangle):~\int_0^1 h_t~\eta_t~\text{sgn}(\langle q_t, \cdot \rangle)~dt \big)  \\
&=\int_0^1 h_t~\eta_t~\mathbb{E}\big(\text{sgn}(\langle q_t, \cdot + K\eta \rangle) \big)~dt =\int_0^1 h_t~\eta_t~\mathbb{E}\big(\text{sgn}(BB_t + (Q\eta)_t) \big)~dt.
\end{align*}
Since $\text{sgn}$ is the weak derivative of the modulus, we are interested in expressions of the form $\eta_t~\mathbb{E}\big(\phi'(BB_t + (Q\eta)_t) \big)$. This motivates the following lemma:

\begin{lemma} \label{lem2}
Let $\phi \in C^2_b(\mathbb{R})$, $\eta \in C[0,1]$ and $0 < t <1$. Then,
\begin{align*} \eta_t~\mathbb{E}\big(\phi'(BB_t + (Q\eta)_t) \big)&=-\frac{d^2}{dt^2} \mathbb{E}\big(\phi(BB_t + (Q\eta)_t)\big) + \mathbb{E}\big( \phi''(BB_t + (Q\eta)_t)~\lambda((Q\eta)'_t,BB_t,t) \big) \\
&=-\frac{d^2}{dt^2} \mathbb{E}\big( \Phi_{\eta}(BB)~\phi(BB_t)\big) + \mathbb{E}\big( \phi''(BB_t + (Q\eta)_t)~\lambda((Q\eta)'_t,BB_t,t) \big)
\end{align*}
\end{lemma}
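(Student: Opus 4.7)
The second equality (replacing $\phi(BB_t + (Q\eta)_t)$ by $\Phi_\eta(BB)\,\phi(BB_t)$) is essentially built in: by the relation (\ref{relation}) and Proposition \ref{propCM}, the shift of $\omega$ by $K\eta$ has Radon--Nikodym density $\Phi_\eta$ with respect to $\mu$, and this shift translates $BB_t = \langle q_t,\cdot\rangle$ by $(HK\eta)_t = (q_t,K\eta)_L = (Q\eta)_t = a(t)$. Thus I would quickly dispose of this equality at the outset.

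The substantive content is the first equality. My plan is a direct, one-variable calculation. Set $a(t):=(Q\eta)_t$ and $\sigma^2(t):=t-t^2$, let $p_t(y)=(2\pi\sigma^2(t))^{-1/2}\exp(-y^2/(2\sigma^2(t)))$ be the density of $BB_t$, and write
\[ f(t)\;:=\;\mathbb{E}\bigl(\phi(BB_t+a(t))\bigr)\;=\;\int_{\mathbb R}\phi(u)\,g_t(u)\,du,\qquad g_t(u):=p_t(u-a(t)). \]
From the explicit Gaussian form, $p_t$ satisfies the heat equation $\partial_t p_t=\tfrac{1-2t}{2}\partial_y^2 p_t$, which pushes to the Fokker--Planck-type identity
\[ \partial_t g_t\;=\;\tfrac{1-2t}{2}\partial_u^2 g_t\;-\;a'(t)\,\partial_u g_t. \]
Iterating this on itself gives an explicit formula for $\partial_t^2 g_t$ as a linear combination of $\partial_u g_t,\partial_u^2 g_t,\partial_u^3 g_t,\partial_u^4 g_t$ with coefficients depending on $(t,a'(t),a''(t))$. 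I would then compute $f''(t)=\int\phi(u)\,\partial_t^2 g_t(u)\,du$, integrating by parts at most twice on $\phi$ (so no hypothesis beyond $\phi\in C_b^2$ is used) and expressing the remaining $u$-derivatives of $g_t$ pointwise via $\partial_u g_t(u)=-\sigma^{-2}(u-a(t))g_t(u)$ and $\partial_u^2 g_t(u)=\sigma^{-2}(\sigma^{-2}(u-a(t))^2-1)g_t(u)$. Substituting $a''(t)=(Q\eta)''_t=-\eta_t$ extracts the $\eta_t\mathbb{E}(\phi'(BB_t+a(t)))$ contribution I am chasing, and the remaining terms are linear combinations of $\mathbb{E}(\phi''(BB_t+a(t)))$, $\mathbb{E}(\phi''(BB_t+a(t))\,BB_t)$ and $\mathbb{E}(\phi''(BB_t+a(t))\,BB_t^2)$.

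Finally I would expand $\lambda(a'(t),BB_t,t)=\bigl(a'(t)+\tfrac12 BB_t\tfrac{1-2t}{\sigma^2(t)}\bigr)^2-\tfrac{1}{4\sigma^2(t)}$ and take the expectation against $\phi''(BB_t+a(t))$; this produces exactly the same three types of terms as appear in $-f''(t)$. A direct comparison of coefficients shows that the $\mathbb{E}(\phi''\cdot BB_t)$ and $\mathbb{E}(\phi''\cdot BB_t^2)$ contributions cancel term-by-term, while the $\mathbb{E}(\phi''(BB_t+a(t)))$ terms collapse using the algebraic identity $(1-2t)^2-1=-4\sigma^2(t)$, so that $-f''(t)+\mathbb{E}(\phi''(BB_t+a(t))\lambda(a'(t),BB_t,t))=\eta_t\mathbb{E}(\phi'(BB_t+a(t)))$ as claimed.

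The only subtle point I foresee is bookkeeping: the displayed $\lambda$ is tailor-made precisely so that the cubic and quartic density terms cancel, and the coefficient of $A_2:=\mathbb{E}(\phi''(BB_t+a(t)))$ in $-f''(t)+\mathbb{E}(\phi''\lambda)$ collapses to zero via $(1-2t)^2-1=-4(t-t^2)$. Getting the signs and the factors of $\sigma^2(t)$ right while ensuring that only $\phi,\phi',\phi''$ (and not $\phi''',\phi''''$) appear is the main care-demanding step; once that is set up carefully the identity drops out by pure algebra.
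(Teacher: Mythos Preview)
Your approach is correct and genuinely different from the paper's. The paper first approximates to $\phi\in C_b^4(\mathbb{R})$, writes the Brownian bridge as the semimartingale $dX_t=-\frac{X_t}{1-t}\,dt+dB_t$, applies It\^o's formula twice to $\phi(X_t+H_t)$, and is left with $\phi'''$ and $\phi''''$ expectations, which it then trades down to $\phi''$ expectations via the Gaussian integration-by-parts identities for $\mathcal{N}(0,t-t^2)$. After collecting terms this yields a function $\tilde\lambda$ which is checked to coincide with $\lambda$ (up to the sign convention in its second argument).

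Your route bypasses the stochastic calculus entirely by working with the explicit Gaussian density $g_t(u)=p_t(u-a(t))$, using the scalar heat identity $\partial_t p_t=\tfrac{1-2t}{2}\partial_y^2 p_t$, iterating it to obtain $\partial_t^2 g_t$, and then integrating by parts at most twice onto $\phi$ while expressing the leftover $\partial_u g_t$, $\partial_u^2 g_t$ pointwise via the Gaussian score relations. This has two concrete advantages: no $C_b^4$ approximation is needed, and the bookkeeping is cleaner since you never produce $\phi'''$ or $\phi''''$. The paper's It\^o route, on the other hand, makes the dynamical origin of the drift $-\frac{X_t}{1-t}$ visible and connects more directly to the semimartingale viewpoint used elsewhere (e.g.\ the local-time computations). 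Both arguments collapse to the same algebraic identity $(1-2t)^2-1=-4(t-t^2)$ in the end; your version is the more elementary one, and your handling of the second equality via Cameron--Martin and $(HK\eta)_t=(Q\eta)_t$ matches the paper exactly.
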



\begin{proof}
As in the proof of \cite[Lemma 3.5]{Zam05}, by approximation it is enough to consider $\phi \in C_b^4(\mathbb{R})$ and the idea is to use It{\^o}'s formula in order to prove the assertion. By \cite[Chapter IV, Exercise 3.18]{RY91} the Brownian bridge $(BB_t)_{0 \leq t \leq 1}$ defined in (\ref{BBWN}) is a solution to the SDE
\begin{align} dX_t&=-\frac{X_t}{1-t} ~dt + dB_t, \label{SDEBB} \\
X_0&=X_1=0, \notag
\end{align}
where $(B_t)_{t \geq 0}$ is a standard Brownian motion. Hence, in the following we consider a semimartingale $(X_t)_{0 \leq t \leq 1}$ solving (\ref{SDEBB}).\\
Set $H_t:=H(K\eta)_t=(Q\eta)_t$ and define $Y_t:=X_t + H_t$ for $0 \leq t \leq 1$ such that 
\[dY_t=dX_t + dH_t=\Big(H'_t - \frac{Y_t - H_t}{1-t}\Big)~dt + dB_t \ \text{ and } \ \langle Y \rangle_t = \langle B \rangle_t =t. \]
Let $\tilde{\phi} : \mathbb{R}_{>0} \times \mathbb{R} \rightarrow \mathbb{R}, (t,x) \mapsto \tilde{\phi}(t,x)$ be continuously differentiable in the first variable and twice continuously differentiable in the second variable. Then, It{\^o}'s formula yields for $0\leq t \leq 1$
\begin{align} \label{ito}
\tilde{\phi}(t,Y_t)=\tilde{\phi}(0,0) &+ \int_0^t \Big( \frac{\partial \tilde{\phi}}{\partial t}(s, Y_s) + \frac{\partial \tilde{\phi}}{\partial x} (s,Y_s) \Big( H'_s - \frac{Y_s - H_s}{1-s} \Big) + \frac{1}{2} \frac{\partial^2 \tilde{\phi}}{\partial x^2} (s,Y_s) \Big)~ds \\
&+ \int_0^t \frac{\partial \tilde{\phi}}{\partial x} (s,Y_s) ~dB_s. \notag
\end{align}
Therefore,
\[ \frac{d}{dt} \mathbb{E}\big( \phi(Y_t) \big)= \mathbb{E} \Big( \phi'(Y_t)\Big(H'_t - \frac{Y_t - H_t}{1-t} \Big) + \frac{1}{2} \phi''(Y_t) \Big)
\]
and 
\[ \frac{d^2}{dt^2} \mathbb{E}\big( \phi(Y_t) \big)=\frac{d}{dt} \mathbb{E} \big( \phi'(Y_t)\big) ~H'_t +  \mathbb{E} \big( \phi'(Y_t)\big) ~H''_t - \frac{d}{dt} \mathbb{E} \big(\phi'(Y_t)~\frac{Y_t - H_t}{1-t} \big) + \frac{1}{2} \frac{d}{dt} \mathbb{E} \big( \phi''(Y_t) \big). \]
By applying (\ref{ito}) again (in particular to $\tilde{\phi}(t,x):=\phi'(x)~\frac{x-H_t}{1-t}$ for $0 < t <1$) it follows
\begin{align} \notag
- \mathbb{E}\big(\phi'(Y_t)\big) ~H''_t = - \frac{d^2}{dt^2} \mathbb{E}\big(\phi(Y_t)\big) &+ \mathbb{E} \Big( \phi''(Y_t) \Big( (H'_t)^2 + \frac{(Y_t - H_t)^2}{(1-t)^2} - 2~\frac{Y_t - H_t}{1-t}~H'_t - \frac{1}{1-t} \Big) \Big) \\
&+ \mathbb{E}\Big(\phi'''(Y_t) \Big( H'_t - \frac{Y_t - H_t}{1-t} \Big) \Big) + \frac{1}{4} \mathbb{E}\big( \phi''''(Y_t) \big), \label{eq2}
\end{align}
since the first order terms on the right hand side cancel. $X_t$, $0 < t <1$, is $\mathcal{N}(0,t-t^2)$-distributed and we have the integration by parts formulas
\begin{align*}
&(t-t^2) \int_{\mathbb{R}} \psi'(y + H_t)~ \mathcal{N}(0,t-t^2)(dy)= \int_{\mathbb{R}} y ~\psi(y+H_t)~\mathcal{N}(0,t-t^2)(dy),  \\
&\int_{\mathbb{R}} y~\psi'(y + H_t)~ \mathcal{N}(0,t-t^2)(dy)= \int_{\mathbb{R}} \Big(\frac{y^2}{t-t^2} -1\Big) ~\psi(y+H_t)~\mathcal{N}(0,t-t^2)(dy),  \\
&(t-t^2)^2 \int_{\mathbb{R}} \psi''(y + H_t)~ \mathcal{N}(0,t-t^2)(dy)= \int_{\mathbb{R}} (y^2 - (t-t^2)) ~\psi(y+H_t)~\mathcal{N}(0,t-t^2)(dy).
\end{align*}
Thus, we can replace the third and fourth order terms in (\ref{eq2}) by terms of second order. Finally, we obtain the representation
\begin{align*} 
- \mathbb{E}\big(\phi'(X_t+H_t)\big) ~H''_t&=- \mathbb{E}\big(\phi'(Y_t)\big) ~H''_t  \\
=&- \frac{d^2}{dt^2} \mathbb{E}\big(\phi(Y_t)\big) + \mathbb{E} \big( \phi''(Y_t) ~\tilde{\lambda}(H'_t,-(Y_t-H_t),t)\big) \\
=&- \frac{d^2}{dt^2} \mathbb{E}\big(\phi(X_t + H_t)\big) + \mathbb{E} \big( \phi''(X_t+ H_t) \tilde{\lambda}(H'_t,-X_t,t)\big),
\end{align*}
where
\[ \tilde{\lambda}(x,y,t):=x^2 + y^2 \Big( \frac{1}{(1-t)^2} - \frac{1}{(1-t)(t-t^2)} + \frac{1}{4} \frac{1}{(t-t^2)^2} \Big) - x~y \Big(\frac{1}{t-t^2} - \frac{2}{1-t} \Big) - \frac{1}{4}~\frac{1}{t-t^2}. \]
Calculation yields $\tilde{\lambda}=\lambda$. The statement follows by the relations $H_t=(Q\eta)_t$ and $-H''_t=-(Q\eta)''_t=\eta_t$ as well as the Cameron-Martin formula in Proposition \ref{propCM}.
\end{proof}

\begin{proposition} \label{propIBP}
Let $h \in C_c^2(0,1)$ and $\eta \in C^{\infty}[0,1]$. Then, it holds
\[ \mathbb{E}\big( \partial_{h \cdot \text{sgn}(BB)} \Phi_{\eta}(BB)\big)=- \mathbb{E} \big( \Phi_{\eta}(BB) \int_0^1 h''_t~ |BB_t|~dt \big) + \Big\langle \! \! \Big\langle \Phi_{\eta}(BB), 2\int_0^1 h_t ~\Gamma_t \diamond \delta_0 (|BB_t|)~dt \Big\rangle \! \! \Big\rangle.\] 
\end{proposition}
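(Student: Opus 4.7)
The calculation in the paragraphs preceding Lemma~\ref{lem2} already gives
\[
\mathbb{E}\bigl(\partial_{h\cdot\text{sgn}(BB)} \Phi_\eta(BB)\bigr) = \int_0^1 h_t\,\eta_t\,\mathbb{E}\bigl(\text{sgn}(BB_t+(Q\eta)_t)\bigr)\,dt,
\]
so the idea is to replace $\text{sgn}$ by $\phi'_\varepsilon$ for a smooth approximation $\phi_\varepsilon$ of $|\cdot|$, apply Lemma~\ref{lem2} with $\phi=\phi_\varepsilon$, multiply by $h_t$, integrate twice by parts in $t$, and send $\varepsilon\to 0$. A convenient choice is $\phi'_\varepsilon:=2(\mathbbm{1}_{[0,\infty)}\ast\rho_\varepsilon)-1$ and $\phi_\varepsilon(x):=\int_0^x\phi'_\varepsilon(y)\,dy$, so that $\phi''_\varepsilon=2\rho_\varepsilon$ is a symmetric Dirac sequence, $|\phi'_\varepsilon|\le 1$ with $\phi'_\varepsilon\to\text{sgn}$ a.e., and $|\phi_\varepsilon(x)|\le 1+|x|$ with $\phi_\varepsilon\to|\cdot|$ pointwise.

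Multiplying the identity of Lemma~\ref{lem2} by $h_t$, integrating over $(0,1)$, and using $h\in C_c^2(0,1)$ to move both $t$-derivatives onto $h$ yields
\begin{align*}
\int_0^1 h_t\,\eta_t\,\mathbb{E}\bigl(\phi'_\varepsilon(BB_t+(Q\eta)_t)\bigr)\,dt
  =\;& -\int_0^1 h''_t\,\mathbb{E}\bigl(\Phi_\eta(BB)\,\phi_\varepsilon(BB_t)\bigr)\,dt \\
  & + \int_0^1 h_t\,\mathbb{E}\bigl(\phi''_\varepsilon(BB_t+(Q\eta)_t)\,\lambda((Q\eta)'_t,BB_t,t)\bigr)\,dt,
\end{align*}
the boundary terms vanishing by compact support. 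The Itô-based proof of Lemma~\ref{lem2} only really exploits boundedness of $\phi',\phi''$ and integrability of $\phi(Y_t)$, so it applies to each $\phi_\varepsilon$ directly; if one insists on $\phi\in C^2_b$, one truncates $\phi_\varepsilon$ outside $[-R,R]$ and passes to an outer limit $R\to\infty$ first.

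Sending $\varepsilon\to 0$: the LHS converges to the desired expression by bounded convergence, since $BB_t+(Q\eta)_t$ is a non-degenerate Gaussian hence a.s.\ nonzero for every $t\in(0,1)$. The first RHS term tends to $-\mathbb{E}\bigl(\Phi_\eta(BB)\int_0^1 h''_t\,|BB_t|\,dt\bigr)$ by dominated convergence with the bound $|\phi_\varepsilon(BB_t)|\le 1+|BB_t|$ and Fubini. For the second RHS term, write the inner expectation against the Gaussian density of $BB_t\sim\mathcal{N}(0,t-t^2)$; the Dirac-sequence property of $\phi''_\varepsilon/2$ together with the continuity of $y\mapsto\lambda((Q\eta)'_t,y,t)\,(2\pi(t-t^2))^{-1/2}\exp(-y^2/2(t-t^2))$ in $y$ produces the pointwise limit in $t$
\[
2\,h_t\,\lambda\bigl((Q\eta)'_t,-(Q\eta)_t,t\bigr)\,\frac{1}{\sqrt{2\pi(t-t^2)}}\,\exp\!\Bigl(-\tfrac{(Q\eta)_t^2}{2(t-t^2)}\Bigr),
\]
and dominated convergence in $t$ on $\text{supp}(h)\subset(\delta,1-\delta)$ gives the $(0,1)$-integral of this expression. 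By~(\ref{relation}) we have $\Phi_\eta(BB)=\,:\exp(\langle K\eta,\cdot\rangle):$, and since $H^{K\eta}_t=(Q\eta)_t$ and $\dot H^{K\eta}_t=(Q\eta)'_t$, Theorem~\ref{thmlimit} identifies the limit precisely with $2\,S(\Phi_h)(K\eta)=\bigl\langle\!\!\bigl\langle\Phi_\eta(BB),\,2\!\int_0^1 h_t\,\Gamma_t\diamond\delta_0(|BB_t|)\,dt\bigr\rangle\!\!\bigr\rangle$.

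\textbf{Main obstacle.} The subtle step is the last limit: to apply dominated convergence to the Dirac-sequence argument in the $\phi''_\varepsilon$-term one needs a $t$-integrable majorant, but both $\lambda(\cdot,\cdot,t)$ and $(t-t^2)^{-1/2}$ blow up at $t\in\{0,1\}$. This is exactly where the hypothesis $h\in C_c^2(0,1)$ (rather than $h\in C^2[0,1]$) is indispensable---the support of $h$ keeps us uniformly away from both endpoints.
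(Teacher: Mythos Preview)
Your proof is correct and follows essentially the same route as the paper's: approximate $|\cdot|$ by smooth $\phi_\varepsilon$, apply Lemma~\ref{lem2}, integrate by parts against $h$, pass to the limit, and identify the result via Theorem~\ref{thmlimit} and~(\ref{relation}). The only organizational difference is that the paper first swaps the order of integration in the $\phi''$-term (obtaining a Schwartz function of $y$ and invoking $\phi''_k\to 2\delta_0$ in $\mathcal{S}'(\mathbb{R})$), whereas you take the Dirac-sequence limit pointwise in $t$ and then apply dominated convergence in $t$; both arguments hinge on $\operatorname{supp}(h)\Subset(0,1)$ in exactly the way you note, and the paper likewise records (in its last line) that $K\eta$ may be replaced by a Schwartz function since the $S$-transform of $\Phi_h$ depends only on values on $[0,1]$.
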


\begin{proof}
For $\phi \in C^2_b(\mathbb{R})$ holds in use of Lemma \ref{lem2}
\begin{align*}
&\int_0^1 h_t~\eta_t~\mathbb{E}\big(\phi'(BB_t + (Q\eta)_t) \big)~dt \\
= &- \mathbb{E}\big(\Phi_{\eta}(BB)~\int_0^1 h''_t~\phi(BB_t)~dt\big) + \int_0^1 h_t~\mathbb{E}\big( \phi''(BB_t + (Q\eta)_t)~\lambda((Q\eta)'_t,BB_t,t) \big)~dt.
\end{align*}
Moreover, let $(\phi_k)_{k \in \mathbb{N}}$ be a $C_b^2(\mathbb{R})$-approximation of the modulus such that $\phi_k \rightarrow |\cdot|$, $\phi'_k \rightarrow \text{sgn}$ and $\phi''_k \rightarrow 2~\delta_0$ as $k \rightarrow \infty$ in $\mathcal{S}'(\mathbb{R})$. Then,
\begin{align*}
&\int_0^1 h_t~\mathbb{E}\big( \phi_k''(BB_t + (Q\eta)_t)~\lambda((Q\eta)'_t,BB_t,t) \big)~dt \\
=&\int_0^1 h_t~\int_{\mathbb{R}} \phi_k''(y + (Q\eta)_t)~\frac{\lambda((Q\eta)'_t,y,t)}{\sqrt{2 \pi (t-t^2)}} \exp\Big(- \frac{y^2}{2(t-t^2)}\Big)~dy~dt \\
=& \int_{\mathbb{R}} \phi_k''(y)~\int_0^1 h_t~\frac{\lambda((Q\eta)'_t,y-(Q\eta)_t,t)}{\sqrt{2 \pi (t-t^2)}} \exp\Big(- \frac{(y-(Q\eta)_t)^2}{2(t-t^2)}\Big)~dt~dy \\
\longrightarrow & \quad 2~\int_0^1 h_t~\frac{\lambda((Q\eta)'_t,-(Q\eta)_t,t)}{\sqrt{2 \pi (t-t^2)}} \exp\Big(- \frac{(Q\eta)_t)^2}{2(t-t^2)}\Big) \quad \text{as } k \rightarrow \infty,
\end{align*}
since $\varphi \in \mathcal{S}(\mathbb{R})$ with 
\[ \varphi(y):=\int_0^1 h_t~\frac{\lambda((Q\eta)'_t,y-(Q\eta)_t,t)}{\sqrt{2 \pi (t-t^2)}} \exp\Big(- \frac{(y-(Q\eta)_t)^2}{2(t-t^2)}\Big)~dt \quad \text{for } y \in \mathbb{R}. \] 
Similarly, the convergence of the remaining two terms follows. Hence, using this approximation and Theorem \ref{thmlimit} we can conclude that
\begin{align*}
&\mathbb{E}\big( \partial_{h \cdot \text{sgn}(BB)} \Phi_{\eta}(BB)\big)\\
=&\int_0^1 h_t~\eta_t~\mathbb{E}\big(\text{sgn}(BB_t + (Q\eta)_t) \big)~dt \\
=&- \mathbb{E}\big(\Phi_{\eta}(BB)~\int_0^1 h''_t~|BB_t|~dt\big) + 2\int_0^1 h_t~\frac{\lambda((Q\eta)'_t,-(Q\eta)_t,t)}{\sqrt{2 \pi (t-t^2)}} \exp\Big(- \frac{(Q\eta)_t)^2}{2(t-t^2)}\Big)~dt \\
=&- \mathbb{E}\big(\Phi_{\eta}(BB)~\int_0^1 h''_t~|BB_t|~dt\big) + \Big\langle \! \! \Big\langle \Phi_{\eta}(BB),2 \int_0^1 h_t ~\Gamma_t \diamond \delta_0 (|BB_t|)~dt \Big\rangle \! \! \Big\rangle.
\end{align*}
Here, we used again $H(K\eta)_t=(Q\eta)_t$ and the relation (\ref{relation}). Moreover, we can assume that $K\eta$ is the restriction of an element in $\mathcal{S}(\mathbb{R})$ in view of Remark \ref{remark}.
\end{proof}

\section{Gaussian gradient Dirichlet form with reference measure $\mu^{BB}$} \label{sectDF}

We consider the Gaussian gradient Dirichlet form $(\mathcal{G},W^{1,2}(L;\mu^{BB}))$ on $L^2(L;\mu^{BB})$ and prove by approximation that its domain contains $\mathcal{F}C_b^{\infty}(L)$-functions composed with the modulus. This result will be useful in order to conclude Theorem \ref{thmmain} in use of the results of Section \ref{sectIBP}. \\

Similarly to $\exp(C)$ we define
\[ \exp(L):=\text{span}~\big\{  \sin((\eta,\cdot)_{\scriptstyle{L}}), \cos((\eta,\cdot)_{\scriptstyle{L}})~\big|~\eta \in L \big\}. \]
It holds $\exp(C^{\infty}) \subset \exp(L) \subset \mathcal{F}C_b^{\infty}(L)$, where $\exp(C)$ and $\mathcal{F}C_b^{\infty}(L)$ are defined in Section \ref{secintro}. \\
For $F \in \exp(L)$, and $f,h \in L$, define in analogy to (\ref{gateaux}) the directional derivative
\[ \partial_h F(f):=\lim_{\varepsilon \rightarrow 0} \frac{ F(f+\varepsilon h) - F(f)}{\varepsilon}.\]
The Fr{\'e}chet derivative $\nabla F:L \rightarrow L$ is given such that
\[ (\nabla F(f), h)_{\scriptstyle{L^2}}=\partial_h F(f).\]
Hence, for $\eta \in L$ holds
\[ \partial_h\sin((\eta,\cdot)_{\scriptstyle{L}})=(\eta,h)_{\scriptstyle{L}}~\cos((\eta,\cdot)_{\scriptstyle{L}}), \quad \nabla \sin((\eta,\cdot)_{\scriptstyle{L}})= \eta ~\cos((\eta,\cdot)_{\scriptstyle{L}}).\] 
Denote by $\mu^{BB}$ the image measure of the white noise measure $\mu$ under the map 
\[ BB: \mathcal{S}'(\mathbb{R}) \rightarrow L, \omega \mapsto BB(\omega):=(BB_t(\omega))_{0 \leq t \leq 1}, \]
i.e., $\mu^{BB}$ is the law of a Brownian bridge from $0$ to $0$ on $L=L^2(0,1)$ or equivalently, a centered Gaussian measure on $L$ with covariance operator $Q$ as defined in (\ref{covariance}). Note that $\mu^{BB}$ is well-defined, since $(BB_t)_{0 \leq t \leq 1}$ has surely continuous paths. Moreover, $BB$ is measurable with respect to $\mathcal{B}(L)$. This fact follows, since $BB$ is also well-defined as a $C[0,1]$-valued mapping, the embedding $C[0,1] \subset L^2(0,1)$ is continuous and $C[0,1]$-valued random variables are exactly given by real-valued processes on $[0,1]$ with continuous paths by \cite[Lemma 14.1]{Kal97}. \\
Then, the symmetric bilinear form $(\mathcal{G},\exp(L))$ is given by
\begin{align*} \mathcal{G}(F,G)&:= \frac{1}{2}~\int_{L} \big(\nabla F(f),\nabla G(f)\big)_{\scriptstyle{L}}~d\mu^{BB}(f) \\
&= \frac{1}{2}~\int_{\mathcal{S}'(\mathbb{R})} \big(\nabla F(BB(\omega)),\nabla G(BB(\omega))\big)_{\scriptstyle{L}}~d\mu(\omega) \quad \text{for } F,G \in \exp(L).
\end{align*}
$(\mathcal{G},\exp(L))$ is closable on $L^2(L;\mu^{BB})$ by \cite[Chapter 10]{DaPr06} and its closure is a quasi-regular local Dirichlet form, which we denote by $(\mathcal{G},W^{1,2}(L;\mu^{BB}))$. Moreover, the space $C_b^1(L)$ of continuous, bounded functions on $L$ with continuous, bounded Fr{\'e}chet derivative is contained in $W^{1,2}(L;\mu^{BB})$ and also
\[ \mathcal{G}(F,G)=\frac{1}{2}~\int_{L} \big(\nabla F(f),\nabla G(f)\big)_{\scriptstyle{L}}~d\mu^{BB}(f) \quad \text{for } F,G \in C_b^1(L)\]
in view of \cite[Proposition 10.6]{DaPr06}, where $\nabla F$ and $\nabla G$ denote the Fr{\'e}chet derivatives of $F$ and $G$ respectively. \\

Due to the behavior of the modulus at zero the zeros of the Brownian bridge are of particular interest regarding a further analysis. The following result states some properties of the zero level set of the Brownian bridge form zero to zero and can be found in \cite[Lemma 2.2]{Pet89} (see also \cite[Theorem 2.28]{MP10} for the corresponding result for Brownian motion):

\begin{lemma} \label{lemPet}
Denote by the random set
\[ z_{\omega}:= \{ t \in [0,1]~\big|~BB_t(\omega)=0\} \]
the zero-level set of the Brownian bridge. $z_{\omega}$ almost surely
\begin{enumerate}
\item has Lebesgue measure zero,
\item is closed,
\item has accumulation points at $t=0$ and $t=1$,
\item has no isolated points in $(0,1)$.
\end{enumerate}
\end{lemma}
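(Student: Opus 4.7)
The plan is to treat the four assertions in sequence, reducing (iii) and (iv) to the classical analogous properties of the zero set of a standard Brownian motion via a time-change representation.

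Assertion (ii) is immediate: since the paths of $BB$ are continuous $\mu$-a.s., the set $z_\omega$ is the preimage of the closed singleton $\{0\}$ under the continuous map $t \mapsto BB_t(\omega)$ and hence closed. Assertion (i) follows by Fubini's theorem: for $0 < t < 1$ the random variable $BB_t$ is $\mathcal{N}(0,t-t^2)$-distributed (as also visible from the $S$-transform formula for $\delta_a(BB_t)$ in Example \ref{exdelta}) and therefore non-degenerate, so
\[
 \mathbb{E}\bigl[\lambda(z_\omega)\bigr] \;=\; \int_0^1 \mu\bigl(\{BB_t = 0\}\bigr)\,dt \;=\; 0.
\]

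For (iii) and (iv), I would employ the well-known representation: if $(\widetilde{B}_s)_{s \geq 0}$ is a standard Brownian motion, then the process $X_t := (1-t)\,\widetilde{B}_{t/(1-t)}$ for $t \in [0,1)$ together with $X_1 := 0$ has the law $\mu^{BB}$. A direct covariance computation gives $\mathbb{E}[X_sX_t]=s(1-t)$ for $s\leq t$, and almost sure continuity at $t=1$ follows from $\widetilde{B}_s/s \to 0$ a.s. Since $\phi(t) := t/(1-t)$ is a continuous, strictly increasing bijection from $[0,1)$ onto $[0,\infty)$, the zero set of $X$ on $[0,1)$ is the image under $\phi^{-1}$ of the zero set of $\widetilde{B}$ on $[0,\infty)$. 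The classical theorem that the zero set of standard Brownian motion is almost surely a closed perfect subset of $[0,\infty)$ with $0$ as an accumulation point (see \cite[Theorem 2.28]{MP10}) now transfers via the homeomorphism $\phi^{-1}$: almost surely, $0$ is an accumulation point of the zero set of $X$, and no zero in $(0,1)$ is isolated. For the accumulation at $t=1$, I would invoke the time-reversal symmetry that $(BB_{1-t})_{0 \leq t \leq 1}$ has the same law as $(BB_t)_{0 \leq t \leq 1}$, and apply the already established accumulation at $0$ to the reversed bridge.

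The main obstacle, if any, is the combinatorial bookkeeping needed to merge (i)--(iv) into a single almost-sure statement: each assertion holds outside a $\mu$-null set, so one finally intersects four events of full measure. A subtler point is that $(BB_t)_{0 \leq t \leq 1}$ is only a fixed continuous modification of $(\langle q_t,\cdot\rangle)_{0 \leq t \leq 1}$ (see the discussion preceding Lemma \ref{lemma1}), and the four pathwise statements must be interpreted along this modification; this is precisely the setting in which the time-change argument is carried out, since the auxiliary process $X$ above can be chosen continuous throughout $[0,1]$ by the same reasoning.
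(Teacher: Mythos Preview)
The paper does not actually prove this lemma; it merely cites \cite[Lemma 2.2]{Pet89} for the statement and points to \cite[Theorem 2.28]{MP10} for the analogous Brownian motion result. Your argument is correct and supplies precisely the details the paper omits: (i) via Fubini and the non-degeneracy of $BB_t$ for $0<t<1$, (ii) by continuity of paths, and (iii)--(iv) by transferring the perfectness of the Brownian zero set through the time-change $t\mapsto t/(1-t)$ together with the time-reversal symmetry of the bridge. This is the standard route, and nothing in the paper suggests a different one was intended.
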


Using these properties we can prove the following:

\begin{theorem} \label{thmW12}
$F \circ |\cdot| \in W^{1,2}(L;\mu^{BB})$ for every $F \in \mathcal{F}C_b^{\infty}(L)$ with weak derivative given by $\nabla (F \circ |\cdot|)=\textnormal{sgn}(\cdot)~\nabla F \circ |\cdot|$ and $\exp(C^{\infty})$ is dense in $W^{1,2}(L;\mu^{BB})$.
\end{theorem}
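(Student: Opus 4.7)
\emph{Strategy.} The plan is to prove the $W^{1,2}$-statement by a two-layer approximation combined with two applications of the closedness of $(\mathcal{G},W^{1,2}(L;\mu^{BB}))$. First I replace the modulus by a smooth even mollification $\phi_k$ of $|\cdot|$, e.g.\ $\phi_k(x):=\sqrt{x^2+1/k^{2}}-1/k$, and set $F_k(f):=F(\phi_k\circ f)$ (pointwise application); this $\phi_k$ satisfies $\phi_k(0)=0$, $\phi_k\uparrow|\cdot|$ uniformly, $|\phi_k'|\le 1$, $\phi_k'$ Lipschitz and $\phi_k'(x)\to\text{sgn}(x)$ for $x\ne 0$. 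Second, for each fixed $k$, I further approximate $F_k$ by cylindrical functions: choose an orthonormal basis $(e_n)_{n\in\mathbb{N}}\subset C^{\infty}[0,1]\subset L^{\infty}(0,1)$ of $L$ with orthogonal projection $P_N$ onto $\textnormal{span}(e_1,\dots,e_N)$, write $F(f)=g((\eta_1,f)_L,\dots,(\eta_m,f)_L)$ and put
\[F_{k,N}(f):=g\bigl((\eta_1,\phi_k\circ P_Nf)_L,\dots,(\eta_m,\phi_k\circ P_Nf)_L\bigr).\]
Since $(y_1,\dots,y_N)\mapsto\int_0^1\eta_i(t)\phi_k\bigl(\sum_jy_je_j(t)\bigr)\,dt$ is smooth with bounded partials of all orders (thanks to boundedness of every $\phi_k^{(\ell)}$, of $g$ with all its derivatives, and to $e_n\in L^{\infty}$), one checks directly that $F_{k,N}\in\mathcal{F}C_b^{\infty}(L)\subset C_b^{1}(L)\subset W^{1,2}(L;\mu^{BB})$.

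\emph{Two limit passages.} The chain rule gives $\nabla F_{k,N}(f)=\sum_i\partial_ig\bigl(\dots\bigr)\,P_N[\eta_i\,\phi_k'(P_Nf)]$, with the uniform bound $\|\nabla F_{k,N}(f)\|_L\le\sum_i\|\partial_ig\|_{\infty}\|\eta_i\|_L$. For each fixed $f\in L$ one has $P_Nf\to f$ in $L$; a standard sub-subsequence argument (extract $a.e.$-pointwise convergence, use continuity and boundedness of $\phi_k'$, and apply dominated convergence) yields $P_N[\eta_i\,\phi_k'(P_Nf)]\to\eta_i\,\phi_k'(f)$ in $L$ and the $\partial_ig$-factors converge by continuity. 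Dominated convergence on $(L,\mu^{BB})$ then gives $F_{k,N}\to F_k$ in $L^{2}(L;\mu^{BB})$ and $\nabla F_{k,N}\to\sum_i\partial_ig(\dots)\,\eta_i\phi_k'(f)$ in $L^{2}(L;\mu^{BB};L)$, so closedness places $F_k\in W^{1,2}(L;\mu^{BB})$ with this weak gradient. Letting now $k\to\infty$, $F_k(f)\to F(|f|)$ boundedly in $L^{2}$. The only delicate point is the convergence $\phi_k'(f(t))\to\text{sgn}(f(t))$, which fails precisely on $\{t:f(t)=0\}$. Here Lemma \ref{lemPet}(i) is decisive: $\mu^{BB}$-a.s.\ this zero set has Lebesgue measure zero, so pointwise convergence holds a.e.\ in $t$ for $\mu^{BB}$-a.e.\ $f$, and the bound $|\phi_k'|\le 1$ lets dominated convergence deliver $\eta_i\phi_k'(f)\to\eta_i\,\text{sgn}(f)$ in $L$ for $\mu^{BB}$-a.e.\ $f$. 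A second application of closedness then produces $F\circ|\cdot|\in W^{1,2}(L;\mu^{BB})$ with $\nabla(F\circ|\cdot|)(f)=\text{sgn}(f)\,\nabla F(|f|)$.

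\emph{Density of $\exp(C^{\infty})$ and the core difficulty.} Since $W^{1,2}(L;\mu^{BB})$ is the closure of $\exp(L)$ by construction, it suffices to approximate each generator $\sin((\eta,\cdot)_L),\cos((\eta,\cdot)_L)$ with $\eta\in L$ by the analogous expressions with $\eta_n\in C^{\infty}[0,1]$ and $\eta_n\to\eta$ in $L$. Using $|\sin a-\sin b|,|\cos a-\cos b|\le|a-b|$, $|(\eta-\eta_n,f)_L|\le\|\eta-\eta_n\|_L\|f\|_L$, and the finite trace $\int_L\|f\|_L^{2}\,d\mu^{BB}=\text{Tr}(Q)<\infty$, one obtains convergence in both $L^{2}(L;\mu^{BB})$ and in Dirichlet energy, hence in $W^{1,2}$. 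The main obstacle throughout the proof is the singularity of $\text{sgn}$ at the origin: the jump of $\text{sgn}$ is what prevents a naive chain rule for $F\circ|\cdot|$ and what makes $\phi_k'$ necessarily diverge from $\text{sgn}$ on the zero set. It is removed in exactly one place, by Lemma \ref{lemPet}(i), which ensures that the exceptional set is Lebesgue-null for $\mu^{BB}$-a.e.\ path and so does not contribute to the limiting gradient.
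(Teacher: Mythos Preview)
Your argument is correct and rests on the same two pillars as the paper's proof: a regularisation of the modulus together with closedness of the form, and Lemma~\ref{lemPet}(i) to dispose of the zero set when passing to the limit in the derivative. The density part is identical.

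The implementation of the approximation differs, though. The paper works with a single layer: it replaces $|\cdot|$ by the piecewise-$C^{1}$ function
\[
B_n(x)=\begin{cases}\tfrac{n}{2}x^{2},&|x|<1/n,\\ |x|-\tfrac{1}{2n},&|x|\ge 1/n,\end{cases}
\]
and simultaneously truncates each $\eta_i$ to $\eta_i^{n}:=(\eta_i\wedge n)\vee(-n)\in L^{\infty}(0,1)$, setting $G_n:=g\bigl((\eta_1^{n},B_n(\cdot))_L,\dots,(\eta_k^{n},B_n(\cdot))_L\bigr)$. It then verifies by hand that $G_n\in C_b^{1}(L)$ (using Lipschitz continuity of $B_n'$ and boundedness of $\eta_i^{n}$) and takes one limit $n\to\infty$. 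Your route instead keeps the $\eta_i$ untouched, regularises the modulus by the $C^{\infty}$ function $\phi_k$, and introduces a second, finite-dimensional layer $P_N$ so that the approximant lands in $\mathcal{F}C_b^{\infty}(L)$ rather than merely in $C_b^{1}(L)$; the price is two successive limits and the sub-subsequence argument for the inner one. Both schemes yield the same gradient formula and both hinge on Lemma~\ref{lemPet}(i) at precisely the same point. The paper's version is a bit more economical (one limit, no projection), while yours has the pleasant feature that every approximant is itself a smooth cylinder function of the same type as $F$.
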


\begin{proof}
Assume that $F \in \mathcal{F}C_b^{\infty}(L)$, i.e., $F=g\big( (\eta_1,\cdot)_{\scriptstyle{L}},\dots, (\eta_k,\cdot)_{\scriptstyle{L}} \big)$ for $k \in \mathbb{N}$, $g \in C_b^{\infty}(\mathbb{R}^k)$ and $\eta_i \in L$ for $i=1,\dots,k$. Define $G:=F \circ |\cdot|$. By closedness and $C_b^1(L) \subset W^{1,2}(L;\mu^{BB})$ it is sufficient to construct a sequence $(G_n)_{n \in \mathbb{N}}$ in $C^1_b(L)$ with limit $G$ in $L^2(L;\mu^{BB})$ such that the sequence of derivatives converges in $L^2(L;\mu^{BB};L)$ to $\nabla G$ defined by 
\[ \nabla G(f):=\text{sgn}(f)~\nabla F(|f|)=\text{sgn}(f)~\sum_{i=1}^k \eta_i~\partial_i g((\eta_1,|f|)_{\scriptstyle{L}},\dots, (\eta_k,|f|)_{\scriptstyle{L}}) \quad \text{for } f \in L.\]
Let $B_n:\mathbb{R} \rightarrow [0,\infty)$, $n \in \mathbb{N}$, be given by
\begin{align*}
B_n(x) :=
   \begin{cases}
  -x - \frac{1}{2n}   & \text{for } x \leq - \frac{1}{n} \\
  \frac{n}{2} x^2   &  \text{for } -\frac{1}{n} < x < \frac{1}{n} \\
   x - \frac{1}{2n}  & \text{for } x \geq \frac{1}{n}.
   \end{cases}
\end{align*}
In particular, $B_n$ converges uniformly to $|\cdot|$ (note that $\Vert B_n - | \cdot |\Vert_{\infty}=\frac{1}{2n}$ and $|B_n(x)| \leq |x|$ for every $x \in \mathbb{R}$), $B_n$ is continuously differentiable and its first derivative converges pointwisely to $\text{sgn}$ for all points $x \neq 0$. Define
\[ G_n:=F \circ B_n \quad \text{for } n \in \mathbb{N}. \]
Furthermore, for $i=1,\dots,k$ let $(\eta^n_i)_{n \in \mathbb{N}}$ be the sequence in $L^{\infty}(0,1)$ given by
\[ \eta^n_i:= (\eta_i \wedge n) \vee (- n) := \max \{ \min \{ \eta_i, n \}, -n \} \] 
which converges to $\eta_i$ in $L$ by dominated convergence. In particular, it holds $|\eta_i^n | \leq n$ as well as $|\eta_i^n | \leq |\eta_i|$ $dx$-a.e. and $\eta_i^n \rightarrow \eta_i$ $dx$-a.e.. Set
\[ F_n:=g\big( (\eta^n_1,\cdot)_{\scriptstyle{L}},\dots, (\eta_k^n,\cdot)_{\scriptstyle{L}} \big) \]
as well as
\[ G_n(f):= F_n \circ B_n (f) = g\big( (\eta^n_1,B_n(f))_{\scriptstyle{L}},\dots, (\eta_k^n,B_n(f))_{\scriptstyle{L}} \big) \quad \text{ for } f \in L.\]
Note that $F_n \in  C_b^1(L)$ with Fr{\'e}chet derivative $\nabla F_n=\sum_{i=1}^k \eta_i^n~J_i^n$, where for $i=1,\dots, k$ $J_i^n:=\partial_i g((\eta^n_1,\cdot)_{\scriptstyle{L}},\dots, (\eta^n_k,\cdot)_{\scriptstyle{L}})$.
For $f,h \in L$ holds
\begin{align*}
\frac{B_n(f+\varepsilon h) - B_n(f)}{\varepsilon} \rightarrow h~ B_n^{\prime}(f) \quad \text{pointwisely $dx$-a.e. as } \varepsilon \rightarrow 0.
\end{align*}
The absolute value of the left hand side is bounded by $|h|$, since $|B'_n| \leq 1$. Thus, by dominated convergence holds for $i=1,\dots,k$
\[ \lim_{\varepsilon \rightarrow 0} \frac{1}{\varepsilon} (\eta_i^n, B_n(f+\varepsilon h)-B_n(f))_{\scriptstyle{L}}=(\eta_i^n, B_n^{\prime}(f) ~h)_{\scriptstyle{L}}=(B_n^{\prime}(f) ~\eta_i^n,h)_{\scriptstyle{L}} \]
which implies in use of the differentiability of $g$ that
\[ \lim_{\varepsilon \rightarrow 0} \frac{1}{\varepsilon} (G_n(f+\varepsilon h) - G_n(f))=\sum_{i=1}^k J_i^n(B_n(f))~(B_n^{\prime}(f)~\eta_i^n,h)_{\scriptstyle{L}}.\]
Hence, $G_n$ is G{\^a}teaux differentiable at every point $f \in L$ with derivative
\[ B_n^{\prime}(f) ~\sum_{i=1}^k \eta_i^n~J_i^n(B_n(f))= B_n^{\prime}(f) ~\nabla F_n(B_n(f)). \]
Moreover,  
\begin{align*}
&\Vert B_n^{\prime}(f)~ \nabla F_n(B_n(f)) - B_n^{\prime}(h)~ \nabla F_n(B_n(h)) \Vert_{\scriptstyle{L}} \\
\leq &\Vert \big( B_n^{\prime}(f) - B_n^{\prime}(h) \big)~ \nabla F_n(B_n(f)) \Vert_{\scriptstyle{L}} + \Vert B_n^{\prime}(h)~ \big( \nabla F_n(B_n (f)) - \nabla F_n(B_n(h)) \big) \Vert_{\scriptstyle{L}} \\
\leq & C(n) ~ \Vert B_n^{\prime}(f) -B_n^{\prime}(h) \Vert_{\scriptstyle{L}} + \Vert \nabla F_n(B_n (f)) - \nabla F_n(B_n(h)) \Vert_{\scriptstyle{L}} \\
\leq & C(n)~n~  \Vert f-h \Vert_{\scriptstyle{L}} + \Vert \nabla F_n(B_n (f)) - \nabla F_n(B_n(h)) \Vert_{\scriptstyle{L}}
\end{align*}
for a real constant $C(n)$ depending on $n$, since $\eta_i^n \in L^{\infty}(0,1)$ for $i=1,\dots,k$ and every $n \in \mathbb{N}$. Moreover, we used $|B_n^{\prime}(h)| \leq 1$ and the Lipschitz continuity of $B_n^{\prime}$ with Lipschitz constant $n$. Note that by the mean value theorem
\begin{align*}
|(\eta_i^n, B_n(f))_{\scriptstyle{L}}-(\eta^n_i,B_n(h))_{\scriptstyle{L}}| \leq \Vert \eta_i^n \Vert_{\scriptstyle{L}} ~ \Vert B_n(f)-B_n(h) \Vert_{\scriptstyle{L}} \leq \Vert \eta_i^n \Vert_{\scriptstyle{L}} ~ \Vert f-h \Vert_{\scriptstyle{L}}
\end{align*}
for $i=1,\dots,k$. This implies the continuity of $\nabla F_n \circ B_n$ and thus, the G{\^a}teaux derivative of $G_n$ is continuous at $f \in L$. Hence, $G_n \in C_b^1(L)$ with bounded Fr{\'e}chet derivative $\nabla G_n=B_n^{\prime}~ \nabla F_n \circ B_n$.\\
$(G_n)_{n \in \mathbb{N}}$ converges to $G=F \circ |\cdot |$ in $L^2(L;\mu^{BB})$. Indeed, for fixed $f \in L$ holds
\begin{align*}
\big|G_n(f)-G(f)\big| \leq \Vert \nabla g \Vert_{\infty} ~\sqrt{ \sum_{i=1}^k \big(\eta_i^n, B_n(f)-|f|\big)_{\scriptstyle{L}}^2} \rightarrow 0 \quad \text{ as } n \rightarrow \infty, 
\end{align*}
since $\eta_n (B_n(f)-|f|) \rightarrow 0$ pointwisely as $n \rightarrow \infty$ and $|B_n(f)-|f|| \leq 2|f| \in L$. Note that it even holds $|B_n(f)-|f|| \leq \frac{1}{2n}$. Furthermore, $|G_n(f)-G(f)| \leq 2 \Vert g \Vert_{\infty}$ for every $f \in L$. Therefore, we obtain again by dominated convergence
\[ \int_L \big|G_n(f) - G(f)\big|^2~ d\mu^{BB}(f) \rightarrow 0 \quad \text{ as } n \rightarrow \infty.\]
Similarly to $J_i^n$ define $J_i:=\partial_i g((\eta_1,\cdot)_{\scriptstyle{L}},\dots,(\eta_k,\cdot)_{\scriptstyle{L}})$ for $i=1,\dots,k$ such that $\nabla F=\sum_{i=1}^k \eta_i J_i$. It rests to show that $(\nabla G_n)_{n \in \mathbb{N}}$ converges to $\nabla G$ in $L^2(L;\mu^{BB};L)$ with $\nabla G(f)= \text{sgn}(f)~\nabla F(|f|)$ $ =\text{sgn}(f)~\sum_{i=1}^k \eta_i ~J_i(|f|)$ for $f \in L$. Indeed,
\[ \Vert \nabla G_n(f)-\nabla G(f) \Vert_{\scriptstyle{L}} \leq \sum_{i=1}^k \Vert B'_n(f)~ \eta_i^n ~J_i^n(B_n(f))- \text{sgn}(f)~ \eta_i ~J_i(|f|) \Vert_{\scriptstyle{L}} \]
and for $i=1,\dots,k$ it holds
\begin{align*}
&~ ~ ~\Vert B_n^{\prime} (f) ~\eta_i^n~J_i^n(B_n(f)) - \text{sgn}(f) ~\eta_i~J_i(|f|) \Vert_{\scriptstyle{L}} \\
&\leq \Vert ( B_n^{\prime}(f)-\text{sgn}(f))~\eta_i^n~J_i^n(B_n(f)) \Vert_{\scriptstyle{L}} + \Vert \text{sgn}(f) ( \eta_i^n~J_i^n(B_n(f)) - ~\eta_i~J_i(|f|) \Vert_{\scriptstyle{L}} \\
&\leq \Vert \partial_i g \Vert_{\infty} \Vert ( B_n^{\prime}(f)-\text{sgn}(f))~\eta_i^n \Vert_{\scriptstyle{L}} + \Vert \eta_i^n~J_i^n(B_n(f)) - ~\eta_i~J_i(|f|) \Vert_{\scriptstyle{L}} \\
&\leq  \Vert \partial_i g \Vert_{\infty} \Vert ( B_n^{\prime}(f) - \text{sgn}(f))~ \eta^n_i \Vert_{\scriptstyle{L}} +  \Vert \eta^n_i~(J_i^n(B_n(f)) - J_i(|f|)) \Vert_{\scriptstyle{L}} + \Vert (\eta^n_i -\eta_i)~J_i(|f|) \Vert_{\scriptstyle{L}} \\ 
&\leq  \Vert \partial_i g \Vert_{\infty} \Vert ( B_n^{\prime}(f) - \text{sgn}(f))~ \eta^n_i \Vert_{\scriptstyle{L}} +  \Vert \eta^n_i \Vert_{\scriptstyle{L}}~\Vert \nabla( \partial_i g) \Vert_{\infty}~\sqrt{\sum_{j=1}^k |(\eta^j_n,B_n(f)-|f|)_{\scriptstyle{L}}|} \\
&\hspace{9cm} + \Vert \partial_i g \Vert_{\infty}~\Vert \eta_i^n -\eta_i \Vert_{\scriptstyle{L}} \\ 
&\longrightarrow 0 \quad \text{as } n \rightarrow \infty
\end{align*}
for every $f \in L$ such that $dx ( \{ x \in [0,1]|~f(x)=0 \} ) =0$ by dominated convergence, since in this case we have that $(B_n^{\prime}(f)-\text{sgn}(f))~\eta_i^n \rightarrow 0$ $dx$-a.e. as $n \rightarrow \infty$ as well as $|(B_n^{\prime}(f)-\text{sgn}(f))~\eta_i^n| \leq 2 |\eta^n_i| \leq 2 |\eta_i| \in L$. Since it holds that $dx ( \{ x \in [0,1]|~f(x)=0 \} ) =0$ for $\mu^{BB}$-a.e. $f \in L$ due to Lemma \ref{lemPet}(i), it follows that
\[ \nabla G_n(f) \rightarrow \nabla G(f) \text{ in } L \text{ for } \mu^{BB} \text{-a.e. } f \in L. \]
Furthermore,
\begin{align*}
\Vert \nabla G_n(f)- \nabla G(f) \Vert_{\scriptstyle{L}}^2 \leq  2 \Vert \nabla G_n(f) \Vert_{\scriptstyle{L}}^2 + 2 \Vert \nabla G(f) \Vert_{\scriptstyle{L}}^2 &\leq  2 \Vert \nabla g \Vert_{\infty}^2 \sum_{i=1}^k \big(\Vert \eta^n_i \Vert_{\scriptstyle{L}}^2 + \Vert \eta_i \Vert_{\scriptstyle{L}}^2 \big) \\
&\leq 4 \Vert \nabla g \Vert_{\infty}^2 \sum_{i=1}^k \Vert \eta_i \Vert_{\scriptstyle{L}}^2 \in L^1(L;\mu^{BB}).
\end{align*}
This implies $\int_L \Vert \nabla G_n(f)- \nabla G(f) \Vert^2 ~d\mu^{BB}(f) \rightarrow 0$ as $n \rightarrow \infty$. Hence, $G= F \circ |\cdot| \in W^{1,2}(L;\mu^{BB})$ and $\nabla G= \text{sgn}(\cdot) ~\nabla F \circ |\cdot|$. \\
Furthermore, we know that $\exp(L)$ is dense in $W^{1,2}(L;\mu^{BB})$. Hence, in order to conclude density of $\exp(C^{\infty})$ it is sufficient to approximate $F=g((\eta,\cdot)_{\scriptstyle{L}})$, $\eta \in L$, $g \in \{ \sin,\cos\}$, by a sequence in $\exp(C^{\infty})$. This follows in use of the sequence given by $F_n:=g((\eta_n,\cdot)_{\scriptstyle{L^2}})$, $\eta_n \in C^{\infty}[0,1]$, $n \in \mathbb{N}$, such that $\eta_n \rightarrow \eta$ in $L$ by Lebesgue dominated convergence, boundedness of $g$, $g'$ and the fact that
\[ \int_L \Vert f \Vert_{\scriptstyle{L}}^2 ~d\mu^{BB}(f) < \infty. \]
\end{proof}

\section{Proof of the main result} \label{sectmain}

\begin{proof}[Proof of Theorem \ref{thmmain}]
First, let $F =g((\eta,\cdot)_{\scriptstyle{L}}) \in \exp(C^{\infty})$ for $\eta \in C^{\infty}[0,1]$, $g \in \{\sin,\cos\}$. Consider the exponential $\Phi_{z \eta}: = \exp(-\frac{1}{2} z^2 (Q\eta, \eta)_{\scriptstyle{L}}) \exp(z(\eta,\cdot)_{\scriptstyle{L}})$ as defined at the beginning of Section \ref{sectIBP} and $\tilde{\Phi}_{z \eta}:=\exp(z(\eta,\cdot)_{\scriptstyle{L}})=\exp(\frac{1}{2} z^2(Q\eta,\eta)_{\scriptstyle{L}})~ \Phi_{z\eta}$ with $z \in \mathbb{R}$. By Proposition \ref{propIBP} holds
\begin{align*}
& \mathbb{E}\big( \partial_{h \cdot \text{sgn}(BB)} \tilde{\Phi}_{z \eta}(BB)\big)\\
=&- \mathbb{E} \Big( \tilde{\Phi}_{z \eta} \int_0^1 h''_t~ |BB_t|~dt \Big) + \Big\langle \! \! \Big\langle \tilde{\Phi}_{z \eta}(BB), 2 \int_0^1 h_t ~\Gamma_t \diamond \delta_0 (|BB_t|)~dt \Big\rangle \! \! \Big\rangle,
\end{align*}
where the normalization factor is omitted. All terms are analytic in the variable $z$ and therefore, they extend in the natural way to an entire function on $\mathbb{C}$. In particular, for $z=i$ we can conclude that
\begin{align*}
& \mathbb{E}\big( \partial_{h \cdot \text{sgn}(BB)} F(BB)\big)\\
=&- \mathbb{E} \Big( F(BB) \int_0^1 h''_t~ |BB_t|~dt \Big) + \Big\langle \! \! \Big\langle F(BB), 2 \int_0^1 h_t ~\Gamma_t \diamond \delta_0 (|BB_t|)~dt \Big\rangle \! \! \Big\rangle
\end{align*}
by comparing real and imaginary part. By linearity the statement extends to arbitrary elements in $\exp(C^{\infty})$. In particular, $\int_0^1 h_t ~\Gamma_t \diamond \delta_0 (|BB_t|)~dt$ is well-defined on $\exp(C^{\infty})$ (see also Corollary \ref{coro}). Moreover, 
\begin{align} \label{eqhelp}
& \Big\langle \! \! \Big\langle F(BB), 2 \int_0^1 h_t ~\Gamma_t \diamond \delta_0 (|BB_t|)~dt \Big\rangle \! \! \Big\rangle
= \mathbb{E}\big( \partial_{h \cdot \text{sgn}(BB)} F(BB)\big) + \mathbb{E} \Big( F(BB) \int_0^1 h''_t~ |BB_t|~dt \Big)
\end{align}
for every $F \in \exp(C^{\infty})$. It holds
\begin{align*}
\left| \mathbb{E}\big( \partial_{h \cdot \text{sgn}(BB)} F(BB) \big)\right| 
\leq ~ \Vert h \Vert_{\scriptstyle{L}} ~ \mathbb{E} \big( \Vert \nabla F(BB) \Vert_{\scriptstyle{L}} \big),
\end{align*}
since $\partial_{h \cdot \text{sgn}(BB)}F(BB)=(h \cdot \text{sgn}(BB), \nabla F(BB))_{\scriptstyle{L}}$ $\mu^{BB}$-a.e. and $| \text{sgn}(BB) |=1$. Similarly,
\begin{align*}
\left| \mathbb{E} \big( F(BB) \int_0^1 h''_t~ |BB_t|~dt \big) \right|^2
\leq  \mathbb{E} \Big( \Big(\int_0^1 h''_t~ |BB_t|~dt \Big)^2 \Big) ~\mathbb{E} \big( F(BB)^2 \big) 
\leq  \frac{1}{6}~\Vert h'' \Vert_{\infty}^2 ~ ~\mathbb{E} \big( F(BB)^2 \big).
\end{align*}
Thus, by density of $\exp(C^{\infty})$ in $W^{1,2}(L;\mu^{BB})$ and continuity of the right hand side of (\ref{eqhelp}), the linear map $\int_0^1 h_t ~\Gamma_t \diamond \delta_0 (|BB_t|)~dt$ extends uniquely to an element in $(W^{1,2}(L;\mu^{BB})'$.\\
Let $h \in C_c^2(0,1)$ and $F \in \mathcal{F}C_b^{\infty}(L)$. By Theorem \ref{thmW12} it exists a sequence $(F_n)_{n \in \mathbb{N}}$ in $\exp(C^{\infty})$ which converges in $W^{1,2}(L;\mu^{BB})$ to $F \circ | \cdot |$. Hence, the limit
\begin{align} \label{limit} \Big\langle \! \! \Big\langle F(|BB|), 2 \int_0^1 h_t ~\Gamma_t \diamond \delta_0 (|BB_t|)~dt \Big\rangle \! \! \Big\rangle=\lim_{n \rightarrow \infty}\Big\langle \! \! \Big\langle F_n(BB), 2 \int_0^1 h_t ~\Gamma_t \diamond \delta_0 (|BB_t|)~dt \Big\rangle \! \! \Big\rangle
\end{align}
is well-defined and also independent of the sequence $(F_n)_{n \in \mathbb{N}}$. Moreover, (\ref{eqmain}) follows in use of
\begin{align*}
\mathbb{E}\big( \partial_{h \cdot \text{sgn(BB)}} F_n(BB)\big)&~= \mathbb{E}\big( \big(h \cdot \text{sgn}(BB),\nabla F_n(BB)\big)_{\scriptstyle{L}} \big) \\
&\rightarrow \mathbb{E} \big( \big(h \cdot \text{sgn}(BB),\nabla (F \circ |\cdot|)(BB)\big)_{\scriptstyle{L}} \big)=\mathbb{E}\big( \partial_h F(|BB|) \big)
\end{align*}
as $n \rightarrow \infty$.
\end{proof}

As a consequence of the first part of the proof of Theorem \ref{thmmain} we obtain the following corollary:

\begin{corollary} \label{coro}
Let $\eta \in C^{\infty}[0,1]$. Then, it holds 
\begin{align*}  &\Big\langle \! \! \Big\langle \sin((\eta,BB)_{\scriptstyle{L}}), 2 \int_0^1 h_t ~\Gamma_t \diamond \delta_0 (|BB_t|)~dt \Big\rangle \! \! \Big\rangle \\
&= \Im~ \Big\langle \! \! \Big\langle :\exp(\langle i K\eta, \cdot \rangle ):, 2 \int_0^1 h_t ~\Gamma_t \diamond \delta_0 (|BB_t|)~dt \Big\rangle \! \! \Big\rangle=0 
\end{align*}
as well as
\begin{align*}
&\Big\langle \! \! \Big\langle \cos((\eta,BB)_{\scriptstyle{L}}), 2 \int_0^1 h_t ~\Gamma_t \diamond \delta_0 (|BB_t|)~dt \Big\rangle \! \! \Big\rangle \\
&= \Re~ \Big\langle \! \! \Big\langle :\exp(\langle i K\eta, \cdot \rangle ):~, 2 \int_0^1 h_t ~\Gamma_t \diamond \delta_0 (|BB_t|)~dt \Big\rangle \! \! \Big\rangle \\
&= -2 \int_0^1 h_t \frac{\big((Q\eta)^{\prime}_t - \frac{1}{2} (Q\eta)_t \frac{1-2t}{t-t^2} \big)^2 + \frac{1}{4} \frac{1}{t-t^2}}{\sqrt{2 \pi (t-t^2)}}~\exp\big(\frac{((Q\eta)_t)^2}{2(t-t^2)}\big)~dt.
\end{align*}
\end{corollary}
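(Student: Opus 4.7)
The strategy is to exploit the analyticity in $z \in \mathbb{C}$ of the function $F(z):=\langle\!\langle \tilde\Phi_{z\eta}(BB),\, 2\Phi_h\rangle\!\rangle$, where $\tilde\Phi_{z\eta}(BB) = \exp(z(\eta,BB)_L)$, combined with the fundamental identity $\langle\!\langle :\!\exp(\langle\varphi,\cdot\rangle)\!:,\Psi\rangle\!\rangle = S\Psi(\varphi)$ that turns a Hida pairing with a Wick exponential into an $S$-transform. Both ingredients are already in play in the first part of the proof of Theorem~\ref{thmmain}: the entireness of $F$ is exactly what is invoked there to pass from $z\in\mathbb{R}$ to $z=i$. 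Since $\tilde\Phi_{i\eta}(BB) = \cos((\eta,BB)_L) + i \sin((\eta,BB)_L)$, splitting $F(i)$ into real and imaginary parts reduces the corollary to the explicit computation of $F(i)$.

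On the Wick-exponential side, the identity $\Phi_\eta(BB) = :\!\exp(\langle K\eta,\cdot\rangle)\!:$ from Section~\ref{sectIBP} extends analytically in $z$ to $\Phi_{z\eta}(BB) = :\!\exp(\langle zK\eta,\cdot\rangle)\!:$, so that (up to the scalar factor $\exp(\tfrac{1}{2}z^2(Q\eta,\eta)_L)$ relating $\Phi_{z\eta}$ and $\tilde\Phi_{z\eta}$) the pairing $F(z)$ reduces to $2\,S\Phi_h(zK\eta)$. To evaluate at $z = i$, I would substitute $\varphi = iK\eta$ into the formula of Theorem~\ref{thmlimit}. The two useful relations are $(HK\eta)_t = (Q\eta)_t$ (noted at the start of Section~\ref{sectIBP}) and $\int_0^1 (K\eta)_s\,ds = 0$ (immediate from the definition of $K$), which together give $H^{iK\eta}_t = i(Q\eta)_t$ and $\dot H^{iK\eta}_t = i(K\eta)_t = i(Q\eta)'_t$. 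Substituting into the polynomial $\lambda$ produces a purely real expression, and the Gaussian factor becomes $\exp(((Q\eta)_t)^2/(2(t-t^2)))$; the resulting integral is thus real and, after checking signs, matches the right-hand side of the corollary. The vanishing of the imaginary part yields the sine equation, while the real part yields the cosine equation with the claimed explicit integral.

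The main technical obstacle is the legitimacy of plugging $iK\eta$ into the $S$-transform: the function $K\eta$ is only compactly supported on $[0,1]$ with derivative jumps at the endpoints, so it is not literally a Schwartz function. This is exactly the point addressed by Remark~\ref{remark}, which says that the $S$-transforms in Theorem~\ref{thmlimit} depend only on the restriction of the test function to $[0,1]$; one may therefore replace $K\eta$ by any extension in $\mathcal{S}_\mathbb{C}(\mathbb{R})$ without affecting the value. The analyticity of $z \mapsto F(z)$ itself is standard from the uniform exponential $S$-transform bounds derived in the proof of Theorem~\ref{thmlimit} together with Morera's theorem, and is used in precisely the same form in the proof of Theorem~\ref{thmmain}.
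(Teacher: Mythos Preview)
Your proposal is correct and follows essentially the same route as the paper. The paper derives the corollary as a direct consequence of the first part of the proof of Theorem~\ref{thmmain}: one writes the pairing for $\tilde\Phi_{z\eta}(BB)$, uses the relation $\Phi_{z\eta}(BB)=\,:\!\exp(\langle zK\eta,\cdot\rangle)\!:$ together with $(HK\eta)_t=(Q\eta)_t$ and Remark~\ref{remark} to reduce to $S\Phi_h(zK\eta)$, extends analytically to $z=i$, and then reads off real and imaginary parts. Your proposal spells out the explicit substitution $\varphi=iK\eta$ in $\lambda$ and the Gaussian factor more fully than the paper does, but the strategy and ingredients are identical.
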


\subsection*{Acknowledgment}
Financial support by the DFG through the project GR 1809/14-1 is gratefully acknowledged.

\bibliographystyle{alpha}
\bibliography{biblio}

\begin{thebibliography}{HKPS93}

\bibitem[DGZ05]{DGZ05}
J.-D. Deuschel, G.~Giacomin, and L.~Zambotti.
\newblock Scaling limits of equilibrium wetting models in $(1+1)$-dimension.
\newblock {\em Probab. Theory Related Fields}, 132(4):471--500, 2005.

\bibitem[DP06]{DaPr06}
G.~Da~Prato.
\newblock {\em An Introduction to Infinite-Dimensional Analysis}.
\newblock Universitext. Springer, Berlin, Heidelberg, 2006.

\bibitem[FGV16]{FGV14}
T.~Fattler, M.~Grothaus, and R.~Vo{\ss}hall.
\newblock Construction and analysis of a sticky reflected distorted {B}rownian
  motion.
\newblock {\em Ann. Inst. Henri Poincar{\'e} Probab. Stat.}, 52(2):735--762,
  2016.

\bibitem[GKS97]{GKS97}
M.~Grothaus, Y.~Kondratiev, and L.~Streit.
\newblock Complex {G}aussian analysis and the {B}argmann-{S}egal space.
\newblock {\em Methods Funct. Anal. Topology}, 3(2):46--64, 1997.

\bibitem[GKS99]{GKS99}
M.~Grothaus, Y.~Kondratiev, and L.~Streit.
\newblock Regular generalized functions in {G}aussian analysis.
\newblock {\em Infin. Dimens. Anal. Quantum Probab. Relat. Top.}, 2(1):1--24,
  1999.

\bibitem[GNT05]{GNT05}
M.~Gradinaru, I.~Nourdin, and S.~Tindel.
\newblock It{\^o}'s and {T}anaka's -type formulae for the stochastic heat
  euqation: {T}he linear case.
\newblock {\em J. Funct. Anal.}, 228(1):114--143, 2005.

\bibitem[GR15]{GR15}
M.~Grothaus and F.~Riemann.
\newblock A {F}undamental {S}olution to the {S}chroedinger {E}quation with
  {D}oss {P}otentials and its {S}moothness.
\newblock arXiv:1503.05058, 2015.

\bibitem[GRS14]{GRS14}
M.~Grothaus, F.~Riemann, and H.P. Suryawan.
\newblock A {W}hite {N}oise approach to the {F}eynman integrand for electrons
  in random media.
\newblock {\em J. Math. Phys.}, 55, 2014.

\bibitem[Har06]{Har06}
Y.~Hariya.
\newblock Integration by parts formulae for {W}iener measures restricted to
  subsets in $\mathbb{R}^d$.
\newblock {\em J. Funct. Anal.}, 239:594--610, 2006.

\bibitem[HKPS93]{HKPS93}
T.~Hida, H.-H. Kuo, J.~Potthoff, and L.~Streit.
\newblock {\em White {N}oise. {A}n infinite dimensional calculus}, volume 253
  of {\em Mathematics and its Applications}.
\newblock Kluwer {A}cademic {P}ublisher, {D}ordrecht, 1993.

\bibitem[Kal97]{Kal97}
O.~Kallenberg.
\newblock {\em Foundations of Modern Probability}.
\newblock Probability and its Applications. Springer, New York, 1997.

\bibitem[Kuo96]{Kuo96}
H.-H. Kuo.
\newblock {\em White {N}oise {D}istribution {T}heory}.
\newblock Probability and {S}tochastics {S}eries. CRC Press, Boca Raton, FL,
  1996.

\bibitem[MP10]{MP10}
P.~M{\"o}rters and Y.~Peres.
\newblock {\em Brownian Motion}, volume~30 of {\em Cambridge Series in
  Statistical and Probabilistic Mathematics}.
\newblock Cambridge University Press, Cambridge, 2010.

\bibitem[MR92]{MR92}
Z.-M. Ma and M.~R\"ockner.
\newblock {\em Introduction to the Theory of (Non-Symmetric) Dirichlet Forms}.
\newblock Universitext. Springer, Berlin, 1992.

\bibitem[Oba94]{Oba94}
N.~Obata.
\newblock {\em White Noise Calculus and Fock Space}.
\newblock Number 1577 in Lecture Notes in Mathematics. Springer, Berlin, 1994.

\bibitem[Pet89]{Pet89}
D.~Petritis.
\newblock Thermodynamics for the {Z}ero-{L}evel {S}et of the {B}rownian
  {B}ridge.
\newblock {\em Commun. Math. Phys.}, 125(4):579--595, 1989.

\bibitem[PT95]{PT95}
J.~Potthoff and M.~Timpel.
\newblock On a dual pair of spaces of smooth and generalized random variables.
\newblock {\em Potential Anal.}, 4:637--654, 1995.

\bibitem[RY91]{RY91}
D.~Revuz and M.~Yor.
\newblock {\em Continuous Martingales and Brownian Motion}, volume 293 of {\em
  Grundlehren der Mathematischen Wissenschaften}.
\newblock Springer, Berlin, 3rd edition, 1991.

\bibitem[RZZ12]{RZZ12}
M.~R{\"o}ckner, R.-C. Zhu, and X.-C. Zhu.
\newblock The stochastic reflection problem on an infinite dimensional convex
  set and {BV} functions in a {G}elfand triple.
\newblock {\em Ann. Probab.}, 40(4):1759--1794, 2012.

\bibitem[Vog10]{Vog10}
A.~Vogel.
\newblock {\em A new Wick formula for products of white noise distributions and
  application to Feynman path integrands}.
\newblock PhD thesis, University of Kaiserslautern, 2010.

\bibitem[Zam02]{Zam02}
L.~Zambotti.
\newblock Integration by parts formulae on convex sets of paths and
  applications to {SPDE}s with reflection.
\newblock {\em Probab. Theory Related Fields}, 123:579--600, 2002.

\bibitem[Zam03]{Zam03}
L.~Zambotti.
\newblock Integration by parts on $\delta$-{B}essel bridges, $\delta >3$ and
  related {SPDE}s.
\newblock {\em Ann. Probab.}, 31(1):323--348, 2003.

\bibitem[Zam05]{Zam05}
L.~Zambotti.
\newblock Integration by parts on the law of reflecting {B}rownian motion.
\newblock {\em J. Funct. Anal.}, 223(1):147--178, 2005.

\bibitem[Zam06]{Zam06}
L.~Zambotti.
\newblock It{\^o}-{T}anaka's formula for stochastic partial differential
  equations driven by additive space-time white noise.
\newblock In {\em SPDEs and applications VII}, volume 245 of {\em Lect. {N}otes
  {P}ure {A}ppl. {M}ath.}, pages 337--347. Chapman \& Hall/CRC, Boca Raton, FL,
  2006.

\end{thebibliography}

\end{document}